\documentclass[letterpaper,11pt]{amsart}
\usepackage{amsfonts,amssymb,amsmath,amsthm,latexsym}
\usepackage{stmaryrd}
\usepackage[all]{xy}
\usepackage{fullpage}
\usepackage{hyperref}
\usepackage{enumitem,todonotes}
\usepackage{verbatim}

\usepackage{algorithm}
\usepackage[noend]{algpseudocode}
\usepackage{graphicx}

\usepackage{sistyle}
\SIthousandsep{,}

\hypersetup{
    colorlinks=true,
    linkcolor=blue,
    filecolor=magenta,      
    urlcolor=cyan,      
    citecolor=blue,
}

\DeclareMathOperator{\Tr}{Tr}
\DeclareMathOperator{\Conv}{Conv}

\begin{document}

\makeatletter
\newtheorem*{rep@theorem}{\rep@title}
\newcommand{\newreptheorem}[2]{%
\newenvironment{rep#1}[1]{%
 \def\rep@title{#2 \ref{##1}}%
 \begin{rep@theorem}}%
 {\end{rep@theorem}}}
\makeatother

\newtheorem{thm}{Theorem}
\newreptheorem{thm}{Theorem}
\newtheorem{prop}{Proposition}
\newtheorem{cor}{Corollary}
\newtheorem{lem}{Lemma}
\newreptheorem{lem}{Lemma}

\theoremstyle{definition}
\newtheorem{defn}{Definition}
\newtheorem{conj}{Conjecture}
\newtheorem{prob}{Problem}

\theoremstyle{remark}
\newtheorem{rem}{Remark}
\newtheorem{ex}{Example}
\newtheorem{exer}{Exercise}

\newcommand{\Rho}{\mathrm{P}}
\newcommand{\cS}{\mathcal{S}}
\newcommand{\cM}{\mathcal{M}}
\newcommand{\cN}{\mathcal{N}}
\newcommand{\gk}{\kappa}
\newcommand{\gS}{\Sigma}
\newcommand{\gl}{\lambda}
\newcommand{\gt}{\theta}

\newcommand{\cF}{\mathcal{F}}
\newcommand{\cG}{\mathcal{G}}
\newcommand{\cP}{\mathcal{P}}
\newcommand{\cV}{\mathcal{V}}
\newcommand{\cB}{\mathcal{B}}
\newcommand{\cA}{\mathcal{A}}
\newcommand{\ZZ}{\mathbb{Z}}
\newcommand{\NN}{\mathbb{N}}
\newcommand{\QQ}{\mathbb{Q}}
\newcommand{\bA}{\mathbb{A}}
\newcommand{\bB}{\mathbb{B}}
\newcommand{\bC}{\mathbb{C}}
\newcommand{\bD}{\mathbb{D}}
\newcommand{\bE}{\mathbb{E}}
\newcommand{\bF}{\mathbb{F}}
\newcommand{\bI}{\mathbb{I}}
\newcommand{\bP}{\mathbb{P}}
\newcommand{\bS}{\mathbb{S}}
\newcommand{\fA}{\mathbf{A}}
\newcommand{\fB}{\mathbf{B}}

\newcommand{\RR}{\mathbb{R}}
\newcommand{\CC}{\mathbb{C}}
\newcommand{\FF}{\mathbb{F}}
\newcommand{\HH}{\mathbb{H}}
\newcommand{\PP}{\mathbb{P}}
\newcommand{\EE}{\mathbb{E}}

\newcommand{\cK}{\mathcal{K}}
\newcommand{\cL}{\mathcal{L}}
\newcommand{\cO}{\mathcal{O}}

\newcommand{\fp}{\mathfrak{p}}

\newcommand{\dotcup}{\ensuremath{\mathaccent\cdot\cup}}

\title{A Semidefinite Framework for the Sieve}
\author{Zarathustra Brady}
\email{notzeb@gmail.com}

\thanks{This material is based upon work supported by the NSF Mathematical Sciences Postdoctoral Research Fellowship under Grant No. (DMS-1705177).}

\maketitle

\begin{abstract} We describe a semidefinite programming framework for proving upper bounds on concrete sifting problems, and show that the Large Sieve can be interpreted as a special case of this framework. With a small tweak, the Larger Sieve also falls into this framework.

We compare the semidefinite approach to the linear programming approach (i.e., the general framework of the combinatorial sieve and the Selberg sieve), and show that it has a qualitative advantage in a toy case where the primes are completely independent from each other. No new sieve-theoretic bounds are proved.
\end{abstract}

\section{Introduction}

Let $Z$ be any finite set which we will apply the sieve to. Generally we will imagine that $Z$ is a set of integers - in fact, most often $Z$ will be a set of \emph{consecutive} integers - but our framework applies to any finite set $Z$. We imagine that we are given a collection of partitions $\cP$ of the set $Z$. Generally, we imagine that $\cP$ corresponds to a set of primes $\{p_1, ...\}$, and that the partition corresponding to the prime $p_i$ partitions $Z$ according to the congruence classes of the elements of $Z$ modulo $p_i$. Our basic question is as follows.

\begin{prob} Given a finite set $Z$ and a set of partitions $\cP$ of $Z$, what upper bounds can we place on the size of a set $X \subseteq Z$ such that $X$ misses at least one part of each partition $p \in \cP$?
\end{prob}

More generally, we can imagine that we have a number $\kappa_p$ attached to each partition $p \in \cP$, and we can ask for upper bounds on the sizes of sets $X \subseteq Z$ which avoid at least $\kappa_p$ parts of $p$ for each $p \in \cP$.

As a motivating example of a problem that fits into our framework, we have the problem of finding large admissible tuples.

\begin{defn} A set $H \subseteq \ZZ$ is \emph{admissible} if for all primes $p$, $H$ avoids at least one congruence class modulo $p$.
\end{defn}

Taking $Z = \{0,...,N-1\}$ and $\cP$ to be the set of partitions of $Z$ into congruence classes modulo primes $p \le N$, we see that finding the largest admissible tuple $H \subseteq \{0, ..., N-1\}$ is a special case of our general problem.

We approach this question from a somewhat unusual point of view. We imagine that we are handed a relatively small set $Z$ and collection of partitions $\cP$ (i.e., $|Z|, |\cP| < 10^{10}$), and wish to use a computer to automatically prove a numerical upper bound on the size of $X$. Finding the exact optimal upper bound by brute force is no good - we want an \emph{efficient} algorithm for finding upper bounds. Failing that, we would be satisfied with proofs of upper bounds which can be efficiently and mechanically verified, but which may be very difficult to find. Ideally, proofs should be put into a standard form so that examples of proofs that work well can be compared to each other and, hopefully, generalized. This second requirement leads us to consider specialized proof frameworks which are less general than, say, all of axiomatic set theory.

In Section \ref{s-linear} we will review the standard framework of systems of linear inequalities, together with a choice of \emph{sieve weights} which are used to produce a proof of an upper bound on the size of a set $X \subseteq Z$ avoiding at least one congruence class from each partition in $\cP$. This framework has several drawbacks - most importantly for us, it is not clear in general if one can efficiently verify that a collection of sieve weights truly leads to a valid inequality. In practice, this difficulty is avoided by making special choices of sieve weights, for which specialized arguments can be used to prove that they lead to valid inequalities.

After the review of the linear approach we will introduce a new framework of systems of quadratic inequalities in Section \ref{s-framework}, which we will represent using \emph{positive semidefinite matrices}. A proof in this framework is just a collection of matrices satisfying certain positive semidefiniteness conditions on their submatrices, such that their sum is a small multiple of the identity matrix. The entries of these matrices are analogous to the sieve weights of the linear framework.

The advantage of the semidefinite approach is that it is easy to mechanically check whether a matrix is positive semidefinite - for instance, one can simply compute the Cholesky decomposition of the matrix (in terms of the original quadratic inequalities, we can think of this as trying to prove a quadratic inequality by repeatedly completing the square). In fact, standard semidefinite programming algorithms allow one to compute the best possible upper bound using this framework in polynomial time (in numerical experiments, however, the standard semidefinite solvers have not been able to handle sets $Z$ with $|Z| \gg 10^3$ in a reasonable amount of time).

In Sections \ref{s-large} and \ref{s-larger}, we will show that the Large Sieve and the Larger Sieve both fit into our general semidefinite framework, although the framework needs to be expanded slightly to handle the Larger Sieve. The special form of the semidefinite matrices used in these two cases leads to the consideration of a simpler (but slightly less powerful) semidefinite framework which is described in Section \ref{s-simpler}.

In Section \ref{s-first-level}, we show that there is a sense in which we can combine inequalities which each consider just a single partition on its own to produce strong upper bounds on the size of the sifted set $X$. This can be contrasted with the case of the linear framework, where considering one partition at a time only leads to a nontrivial upper bound if $\sum_{p\in \cP} \frac{1}{|p|}$ is less than $1$.


\subsection{Notation}

The author has attempted to find notation which reflects the general nature of the problem, but which matches with the usual multiplicative notation used in sieve theoretic arguments as closely as possible. We therefore always denote partitions in $\cP$ with lowercase letters $p$ or $q$, and we describe subsets $d$ of $\cP$ with a multiplicative notation. Thus $1$ corresponds to the empty subset of $\cP$, and we think of $d\cup k, d\cap k$ as corresponding to the lcm and gcd of $d$ and $k$. If $d,k \subseteq \cP$ are disjoint, then we write $dk$ for their disjoint union. We also abuse notation by writing $p$ for the singleton subset $\{p\} \subseteq \cP$, so if $p \in \cP, d \subseteq \cP$ then the expression $pd$ corresponds to the subset $d \cup \{p\}$ of $\cP$.

For divisibility, the expression $p \in d$ corresponds to $p$ being a prime dividing $d$. The expression $k \subseteq d$ corresponds to $k$ being a divisor of $d$. The notation $d\setminus k$ for the set-theoretic difference is a curious anomaly: if $k \subseteq d$, then it corresponds to $\frac{d}{k}$, and in general it corresponds to $\frac{d}{\gcd(d,k)}$.

When working in the general setup, we use $|p|$ to denote the number of parts in the partition $p$. We extend $\kappa$ to a multiplicative function on $d \subseteq \cP$ by $\kappa(d) = \prod_{p\in d} \kappa_p$. We also define multiplicative functions $\phi(d), \mu(d)$ for $d \subseteq \cP$ by $\phi(p) = |p|-1$ and $\mu(p) = -1$ in analogy with the usual Euler $\phi$-function and the M\"obius function, although we will later need to introduce variants of the $\phi$ function which take the numbers $\kappa_p$ as parameters.

A multiplicative notation for matrices will be introduced in Section \ref{s-large}, and this notation will prove useful in all later sections.

\section{The Linear Framework (and its difficulties)}\label{s-linear}

We will suppose throughtout this section that the partitions in $\cP$ correspond to primes $p$, in order to make use of the existing multiplicative notation of divisibility and squarefree numbers, rather than introducing new notation for sets of partitions. Suppose that for each prime $p$, $X \subseteq Z$ avoids a set $A_p \subseteq Z$ corresponding to some congruence class modulo $p$. If we are working in the more general setting with a number $\kappa_p$ of congruence classes to be avoided modulo each $p$, we assume that $A_p \subseteq Z$ corresponds to some collection of $\kappa_p$ congruence classes modulo $p$.

We extend the notation $A_p$ to squarefree numbers $d$ multiplicatively:
\[
A_d = \bigcap_{p\mid d} A_p,
\]
where we interpret $A_1$ as $Z$. The linear framework assumes as given a system of linear inequalities
\[
-R_d \le |A_d| - \frac{\kappa(d)}{d}|Z| \le R_d,
\]
where $R_d$ are \emph{remainder terms} which are supplied to us by an outside source, and $\kappa(d)$ is defined by
\[
\kappa(d) = \prod_{p \mid d} \kappa_p.
\]
The reader should keep in mind that the main case of interest has $\kappa(d) = 1$ for all $d$.

Adding together the given inequalities with \emph{sieve weights} $\lambda_d$, we get the inequality
\[
\sum_d \lambda_d|A_d| = \Big(\sum_d \lambda_d \frac{\kappa(d)}{d}\Big)|Z| + \cO^*\Big(\sum_d |\lambda_d| R_d\Big),
\]
where the notation $f = \cO^*(g)$ is a shorthand for the inequality $-g \le f \le g$. In order for this to give an upper bound on $|X|$, we need the following implication to be valid:
\[
X \subseteq A_1, \forall p\ X \cap A_p = \emptyset \;\; \implies \;\; |X| \le \sum_d \lambda_d |A_d|.
\]
This implication is equivalent to $\lambda_1 \ge 1$ together with the system of inequalities
\[
\forall k\ \ \sum_{d \mid k} \lambda_d \ge 0.\tag{$*$}\label{sieve-weights}
\]

So the general linear framework proceeds as follows. First we estimate the remainder terms $R_d$. Then we choose a collection of sieve weights $\lambda_d$ which satisfy \eqref{sieve-weights} and have $\lambda_1 = 1$. Finally, we deduce the upper bound
\[
|X| \le \Big(\sum_d \lambda_d \frac{\kappa(d)}{d}\Big)|Z| + \sum_d |\lambda_d| R_d.
\]

The biggest difficulty with the linear framework is that we need to verify the system of inequalities \eqref{sieve-weights}. Even if the $\lambda_d$ are supported on values of $d$ with $d \ll |Z|$, we still have to check \eqref{sieve-weights} for \emph{all} $k$ dividing $\prod_{p \le |Z|} p$.

\begin{prop} It is co-NP-hard to check whether a given collection of sieve weights $\lambda_d$ satisfies \eqref{sieve-weights}, even if $\lambda_d$ is supported on $d$ with at most two prime factors.
\end{prop}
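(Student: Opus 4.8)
The plan is to reduce from a known co-NP-hard problem about the nonnegativity of a pseudo-Boolean (multilinear) polynomial, or equivalently from a satisfiability-type problem. The key observation is that the condition $\sum_{d\mid k}\lambda_d \ge 0$ for all squarefree $k$ is exactly the statement that a certain multilinear polynomial is nonnegative on the Boolean cube $\{0,1\}^n$, where $n$ is the number of primes involved. Concretely, if $\lambda_d$ is supported on $d$ dividing $p_1\cdots p_n$, introduce a variable $x_i\in\{0,1\}$ for each prime $p_i$ and consider
\[
f(x_1,\dots,x_n) \;=\; \sum_{d} \lambda_d \prod_{p_i\mid d} x_i.
\]
Then for a squarefree $k = \prod_{i\in S}p_i$, setting $x_i = 1$ for $i\in S$ and $x_i = 0$ otherwise gives $f = \sum_{d\mid k}\lambda_d$. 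Hence \eqref{sieve-weights} holds for all $k$ if and only if $f \ge 0$ on $\{0,1\}^n$, and checking that a succinctly given multilinear polynomial is nonnegative on the Boolean cube is co-NP-complete. The degree-$2$ restriction on $\lambda_d$ corresponds exactly to restricting $f$ to be a quadratic pseudo-Boolean function, so what we need is that nonnegativity of quadratic pseudo-Boolean functions over $\{0,1\}^n$ is co-NP-hard.

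**Next I would** exhibit the hardness of the quadratic case directly, since the general multilinear fact doesn't immediately give the degree bound. The standard route is via \textsc{Max-Cut}: given a graph $G = (V,E)$ and an integer threshold $t$, the quantity $\sum_{ij\in E}(x_i + x_j - 2x_ix_j)$ counts edges cut by the partition encoded by $x\in\{0,1\}^V$, so $g(x) := t - \sum_{ij\in E}(x_i + x_j - 2x_ix_j)$ is a quadratic pseudo-Boolean function, and $g \ge 0$ on $\{0,1\}^V$ iff the max cut of $G$ is at most $t$ — which is co-NP-hard to decide. Translating back: assign the prime $p_i$ to vertex $i$, and read off the coefficients $\lambda_d$ from $g$ (so $\lambda_1 = t$, $\lambda_{p_i} = -\sum_{j:\, ij\in E} 1$, $\lambda_{p_ip_j} = 2$ for $ij\in E$, and $\lambda_d = 0$ otherwise, up to keeping track of how a vertex being in multiple edges aggregates into $\lambda_{p_i}$). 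Then \eqref{sieve-weights} fails for some $k$ precisely when $G$ has a cut exceeding $t$, and all $\lambda_d$ involved have at most two prime factors, giving the claimed strengthening.

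**There is one technical wrinkle to address**, which I expect to be the main (though minor) obstacle: the statement of the linear framework attaches to each $d$ a weight $\lambda_d$ that is meant to pair with $|A_d|$, and one should make sure that the reduction only asserts co-NP-hardness of the \emph{combinatorial} condition \eqref{sieve-weights}, which is purely a statement about the numbers $\lambda_d$ and doesn't reference any actual set $Z$ or sets $A_p$. Since \eqref{sieve-weights} is literally ``$\forall k:\ \sum_{d\mid k}\lambda_d \ge 0$'', this is clean — no $Z$ needs to be constructed. I would also note that membership in co-NP is obvious (a violating $k$, or equivalently a point of the Boolean cube where $f < 0$, is a polynomial-size certificate of failure, and the sum defining $\sum_{d\mid k}\lambda_d$ can be evaluated in polynomial time since $\lambda$ is given as an explicit list of nonzero values), so in fact the problem is co-NP-complete; but the proposition only claims hardness, so I would keep the focus there. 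The only care needed is ensuring that when the same prime $p_i$ sits in several edges, the multiple contributions $-1$ to $\lambda_{p_i}$ are summed into a single coefficient, which is exactly what the definition of the multilinear polynomial does, so nothing breaks.
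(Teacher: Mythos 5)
Your proof is correct and rests on the same key observation as the paper's: that \eqref{sieve-weights} is precisely nonnegativity of a quadratic pseudo-Boolean function on $\{0,1\}^n$, i.e.\ the co-problem of Binary Quadratic Programming, so any standard NP-hardness gadget finishes the job. The only difference is the source problem — you reduce from Max-Cut (giving integer weights $\lambda_1 = t$, $\lambda_{p_i} = -\deg(i)$, $\lambda_{p_ip_j} = 2$ on edges), whereas the paper reduces from $k$-clique (with $\lambda_1 = 1$, $\lambda_p = -\tfrac{1}{k-1}$, $\lambda_{pq} = |G|$ on non-edges); both gadgets are valid and confined to $d$ with at most two prime factors, and your remark on co-NP membership is a harmless extra.
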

\begin{proof} In fact, this problem is equivalent to determining whether a quadratic polynomial ever takes a negative value, when its inputs are restricted to values from $\{0,1\}$: if we write $k = \prod_p p^{x_p}$, then $\sum_{d \mid k} \lambda_d = \sum_{p < q} \lambda_{pq} x_px_q + \sum_p \lambda_p x_p + \lambda_1$ (note that $x_p^2 = x_p$ for $x_p \in \{0,1\}$, so there is no loss of generality in considering quadratic polynomials of this form). This problem is known as the Binary Quadratic Programming problem, and is well-known to be hard. For instance, we can reduce from $k$-clique as follows: let $G$ be a graph with vertices labeled by primes $p$, put $\lambda_1 = 1$, $\lambda_p = -\frac{1}{k-1}$, and $\lambda_{pq} = |G|$ for any $p,q$ which are not connected by an edge of the graph $G$.
\end{proof}

For computing asymptotics, a good strategy is to bucket the large primes $p$ into finitely many buckets $B_i$, such that each bucket has $\sum_{p \in B_i} \frac{1}{p}$ small (the small primes can be handled by a Selberg sieve, and do not contribute much to the asymptotics). Then we decide on values for $\lambda_d$ which only depend on the number of primes from each bucket which divide $d$, so we can write
\[
\lambda_d = \lambda(e_1, ..., e_k), \;\;\; e_i = \#\{p \in B_i \text{ s.t. } p \mid d\}.
\]
The system of inequalities \eqref{sieve-weights} then becomes
\[
\forall n_1, ..., n_k \in \NN \;\;\; \sum_{e_1, ..., e_k} \lambda(e_1, ..., e_k) \binom{n_1}{e_1}\cdots \binom{n_k}{e_k} \ge 0,
\]
that is, we must determine whether a given polynomial in $k$ variables is nonnegative when its variables take values from the natural numbers.

\begin{prop} If $k$ is sufficiently large, then there is no algorithm which determines whether a given polynomial $f \in \ZZ[x_1, ..., x_k]$ is nonnegative on the naturals, even if the degree of $f$ is bounded by $8$.
\end{prop}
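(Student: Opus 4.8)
The plan is to reduce from Hilbert's tenth problem by a squaring trick. Recall the MRDP theorem in its sharpened form (due to Jones): there is a computable family of polynomials $P \in \ZZ[x_1, \dots, x_k]$, all of degree $4$ in some fixed number $k$ of variables (one may take $k = 58$), for which the question of whether $P$ has a zero in $\NN^k$ is undecidable. Given such a family, I would associate to each $P$ the polynomial $f = P^2 - 1 \in \ZZ[x_1,\dots,x_k]$, which has degree $8$.

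The key observation is that $f$ is nonnegative on all of $\NN^k$ if and only if $P$ has no zero in $\NN^k$. Indeed, if $P(a) = 0$ for some $a \in \NN^k$ then $f(a) = -1 < 0$; conversely, if $P(a) \neq 0$ for every $a \in \NN^k$, then $P(a)$ is a nonzero integer, so $P(a)^2 \geq 1$ and $f(a) \geq 0$. Hence an algorithm deciding whether a given $g \in \ZZ[x_1,\dots,x_k]$ is nonnegative on $\NN^k$ would, applied to the polynomials $P^2 - 1$, decide the complement of an undecidable problem, which is impossible. Since $\deg f = 2 \deg P = 8$, this establishes the proposition. (Note that one direction of the nonnegativity problem is trivially semi-decidable — search for a witness where $g$ is negative — so the undecidability lives entirely in the co-semi-decidable direction, which is exactly where MRDP bites.)

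Essentially all the content is the reduction itself, which is immediate once one has the degree-$4$ refinement of MRDP; the step I would be most careful to state correctly is that everything takes place over $\NN$ rather than $\ZZ$ — precisely the setting in which MRDP is usually phrased, so no difficulty arises. If one wished to avoid quoting the degree-$4$ bound, I would instead start from any undecidable family of Diophantine equations over $\NN$ and lower the degree by hand: introduce a fresh variable $y$ and an equation $y = uv$ for each product $uv$ of variables occurring, producing an equivalent system of equations of degree $\le 2$, and then replace the system $h_1 = \dots = h_m = 0$ by the single equation $h_1^2 + \dots + h_m^2 = 0$ of degree $\le 4$. The number of auxiliary variables introduced is bounded in terms of the original equation, so $k$ remains bounded, and one concludes as before with $f = P^2 - 1$.
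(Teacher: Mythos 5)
Your proof is correct and follows the same route as the paper: invoke the degree-$4$ form of Matiyasevich's theorem and pass to $f = P^2 - 1$ of degree $8$, using that $f \ge 0$ on $\NN^k$ iff $P$ has no zero in $\NN^k$. The extra remarks (semi-decidability, reducing degree by hand) are fine but inessential; the core argument matches the paper's.
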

\begin{proof} By Matiyasevich's resolution of Hilbert's Tenth Problem \cite{matiyasevich}, if $k$ is sufficiently large then there is no algorithm which determines whether a given polynomial $g \in \ZZ[x_1, ..., x_k]$ ever takes the value $0$ for natural inputs $x_1, ..., x_k$, even if the degree of $g$ is bounded by $4$. Now take $f(x_1, ..., x_k) = g(x_1, ..., x_k)^2 - 1$, and note that $f$ is nonnegative on the naturals if and only if $g$ never takes the value $0$ for natural inputs.
\end{proof}

Despite the above result, it turns out that by slightly increasing the high-order terms of our polynomial $\sum_{e_1, ..., e_k} \lambda(e_1, ..., e_k) \binom{n_1}{e_1}\cdots \binom{n_k}{e_k}$ we can reduce proving the inequality to a finite search together with a proof that the polynomial goes to infinity as the variables go to infinity, at the cost of slightly decreasing the quality of our asymptotic bounds. This enabled Selberg \cite{selberg} to prove that optimal asymptotics for the linear approach to the sieve are at least \emph{computable} in theory.

In practice, rather than compute the optimal choice of sieve weights, most work on the linear approach to the sieve focuses on sets of sieve weights which are guaranteed to work by some simple principle. The most prominent example is the principle behind the Selberg sieve: at a high-level, rather than trying to optimize over the set of polynomials which are nonnegative on the natural numbers, we try to optimize over the set of polynomials which can be written as a sum of squares (and are thus nonnegative on the reals as well). The alternative approach is to use recursive principles such as Buchstab iteration, which are guaranteed to produce new valid sets of sieve weights from old valid sets of sieve weights.

There are a few additional drawbacks to the linear framework from the point of view of the general problem considered in this paper. The first is that the requirement of bounds of the form
\[
|A_d| = \frac{\kappa(d)}{d}|Z| + \cO^*(R_d)
\]
is very inflexible. In cases where the parts of some partition $p \in \cP$ have very different sizes from each other, the bounds we get fail to degrade gracefully. A more detailed linear relaxation (with different sieve weights for each element of $Z$) could overcome this difficulty.

The next drawback we will mention is related to the concept of the \emph{hierarchy} of successively stronger linear relaxations to a problem. The \emph{basic} LP would only allow us to use sieve weights $\lambda_d$ with $d$ equal to a prime. At the second level of the hierarchy, we would consider sieves with $\lambda_d$ supported on $d$ a product of two primes, and so on. A sieve which is low on this hierarchy is considered logically simpler than a sieve which is high on this hierarchy. Unfortunately, in the case of ordinary sieving, any constant level of this hierarchy of linear relaxations is useless.

\begin{prop} If $\sum_p \frac{\kappa_p}{p}$ diverges, then for any set of sieve weights $\lambda_d$ satisfying \eqref{sieve-weights} (and with $\lambda_1 = 1$) which are supported on the set of $d$ with at most $2m$ prime factors, we have
\[
\frac{\sum_d \lambda_d\frac{\kappa(d)}{d}}{\prod_p \big(1 - \tfrac{\kappa_p}{p}\big)} \rightarrow \infty.
\]
\end{prop}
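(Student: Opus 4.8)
The plan is to translate the statement into a probabilistic one and then run a Bonferroni-type averaging argument. We may assume $\kappa_p < p$ for every $p$, equivalently $\pi_p := \kappa_p/p \in [0,1)$, since otherwise the sifting condition is vacuous. Let $Y = (Y_p)_{p\in\cP}$ be independent $\{0,1\}$-valued random variables with $\Pr[Y_p = 1] = \pi_p$, and to a system of sieve weights $\lambda_d$ associate the multilinear polynomial $F(x) = \sum_d \lambda_d \prod_{p\in d} x_p$. Writing $e_T \in \{0,1\}^{\cP}$ for the indicator of $T \subseteq \cP$, condition \eqref{sieve-weights} together with $\lambda_1 = 1$ says precisely that $F(e_T) = \sum_{d\subseteq T}\lambda_d \ge 0$ for all $T$ and $F(e_\emptyset) = 1$; the support hypothesis says $\deg F \le 2m$; and since $\EE[\prod_{p\in d}Y_p] = \prod_{p\in d}\pi_p = \kappa(d)/d$ we get $\sum_d \lambda_d \tfrac{\kappa(d)}{d} = \EE[F(Y)]$, while $\prod_p(1-\pi_p) = \Pr[Y = e_\emptyset]$. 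So the claim becomes: $\EE[F(Y)]/\Pr[Y = e_\emptyset] \to \infty$ as $\sum_p \pi_p \to \infty$, uniformly over all such $F$.

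Next I would expand $\EE[F(Y)]/\Pr[Y = e_\emptyset] = \sum_{T\subseteq\cP} F(e_T)\prod_{p\in T}\tfrac{\pi_p}{1-\pi_p}$. Every summand is nonnegative --- that is exactly \eqref{sieve-weights} --- and $\tfrac{\pi_p}{1-\pi_p}\ge\pi_p$, so it suffices to prove that $\Sigma := \sum_{T\subseteq\cP} F(e_T)\prod_{p\in T}\pi_p$ tends to infinity; from here on only the properties $F\ge0$, $F(e_\emptyset)=1$, $\deg F\le 2m$ are used. The crucial structural input is truncated inclusion--exclusion: for any $U\subseteq\cP$ with $|U| = 2m+1$, Möbius inversion on the Boolean lattice gives $\sum_{T\subseteq U}(-1)^{|T|}F(e_T) = (-1)^{|U|}\lambda_U = 0$, hence $\sum_{T\subseteq U,\,|T|\text{ odd}}F(e_T) = \sum_{T\subseteq U,\,|T|\text{ even}}F(e_T)\ge F(e_\emptyset) = 1$.

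Then I would average this inequality over a random $U$ chosen so that its ``contains $T$'' probabilities reproduce the weights $\prod_{p\in T}\pi_p$. Concretely, set $S := \sum_p\pi_p$, sample $P_1,\dots,P_{2m+1}$ i.i.d. with $\Pr[P_i = p] = \pi_p/S$, and put $U = \{P_1,\dots,P_{2m+1}\}$. Since every $\pi_p\le 1$, the chance of a collision is $O_m(1/S)$, so $\Pr[|U| = 2m+1]\ge\tfrac12$ once $S$ is large; and for a fixed $T$ with $|T| = j$ a union bound over injections $T\hookrightarrow\{1,\dots,2m+1\}$ gives $\Pr[T\subseteq U]\le\tfrac{(2m+1)!}{(2m+1-j)!}\,S^{-j}\prod_{p\in T}\pi_p$. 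Taking the conditional expectation of the Bonferroni inequality given $|U| = 2m+1$ and grouping by $j$ then yields $1\lesssim_m\sum_{j\text{ odd}} \Sigma_j/S^j$ with $\Sigma_j := \sum_{|T|=j}F(e_T)\prod_{p\in T}\pi_p\ge 0$; as only $O(m)$ values of $j$ occur, some odd $j_0\le 2m+1$ has $\Sigma_{j_0}\gtrsim_m S^{j_0}\ge S$, whence $\Sigma\ge\Sigma_{j_0}\to\infty$, which finishes the proof.

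The one place needing care --- and the only genuine idea beyond routine bookkeeping --- is this averaging step: one wants a probability distribution on $(2m+1)$-element subsets whose pointwise inclusion probabilities are proportional to $\prod_{p\in T}\pi_p$, yet one must keep the sampled primes distinct, since a collision produces a set of size $\le 2m$ for which the degree-$2m$ inclusion--exclusion identity fails. Sampling $2m+1$ i.i.d.\ primes with weights $\pi_p$ threads this needle precisely because the normalizing constant $S = \sum_p\pi_p$ diverges while each $\pi_p\le 1$, which pins the collision probability down like $1/S$. The rest --- the dictionary of the first paragraph, the inclusion--exclusion identity, and the final estimate --- is mechanical.
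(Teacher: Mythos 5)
Your proof is correct, and it takes a genuinely different route from the paper after the common opening move. Both arguments begin by M\"obius-inverting to reduce the claim to showing $\sum_k \theta(k)\kappa(k)/k \to \infty$ with $\theta(k) = \sum_{d \mid k}\lambda_d \ge 0$ (your $\Sigma$, after noting $\frac{\pi_p}{1-\pi_p}\ge\pi_p$, is exactly this sum), and both exploit the same Bonferroni-type identity: for every $(2m+1)$-element set $U$ of primes, $\sum_{d\subseteq U,\,|d|\text{ odd}}\theta(d)\ge 1$. From there the paper argues by contradiction via a greedy construction: if $\sum_k\theta(k)\kappa(k)/k$ stayed bounded, it pigeonholes that every such $U$ contains a nontrivial ``good'' divisor $d$ with $\theta(d)\ge 2^{-2m}$, observes that the good $d$ contribute only a bounded amount to the sum, and then iteratively peels off bad primes $p_1,p_2,\ldots$ to build a $(2m+1)$-element $U$ all of whose nontrivial subsets are bad --- a contradiction. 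You instead make the Bonferroni inequality quantitative by averaging it over a random $U$ built from $2m+1$ i.i.d.\ draws weighted by $\pi_p/S$, using $\pi_p\le 1$ to control collisions, which yields an explicit bound $\Sigma\gg_m S = \sum_p\kappa_p/p$ directly. Your averaging route buys an explicit rate of divergence and avoids the somewhat informal ``bounded prime'' language in the paper's greedy step; the paper's argument is shorter to state but purely a contradiction and gives no rate. One small remark: since the ambient $\cP$ is implicitly finite (with the ``divergence'' being over a sequence of sieving ranges), all your sums are finite, so no additional summability hypotheses are needed.
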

\begin{proof} Letting $\theta(k) = \sum_{d \mid k} \lambda_d$, we have
\[
\prod_p \Big(1 - \frac{\kappa_p}{p}\Big) \sum_k \theta(k)\frac{\kappa(k)}{k} = \sum_d \lambda_d\frac{\kappa(d)}{d}
\]
by M\"obius inversion. So we just have to show that $\sum_k \theta(k)\frac{\kappa(k)}{k} \rightarrow \infty$. By \eqref{sieve-weights}, each $\theta(k)$ is $\ge 0$, so we just have to show that enough of them are sufficiently large to finish.

For any $k$ having $2m+1$ prime factors, we have
\[
0 = \lambda_k = \sum_{d\mid k} \mu(d)\theta(d)
\]
by M\"obius inversion again, so
\[
\sum_{d\mid k,\ \mu(d)=-1} \theta(d) = \sum_{d\mid k,\ \mu(d)=1} \theta(d) \ge \theta(1) = 1.
\]
Thus, for every $k$ with $2m+1$ prime factors, $k$ has some nontrivial divisor $d$ with $\theta(d) \ge 2^{-2m}$. Call $d$ ``good'' if $\theta(d) \ge 2^{-2m}$, otherwise call it ``bad''.

Suppose for contradiction that $\sum_k \theta(k)\frac{\kappa(k)}{k}$ remains bounded. Then in particular $\sum_{d\text{ good}} \frac{\kappa(d)}{d}$ remains bounded, so there is some bounded prime $p_1$ which is bad. Call $d$ ``$p_1$-good'' if $p_1 \nmid d$ and one of $d, p_1d$ is good. Then $\sum_{d\ p_1\text{-good}} \frac{\kappa(d)}{d}$ remains bounded, so there is a bounded $p_1$-bad prime $p_2$. Continuing like this, we see that there is some bounded $k$ with $2m+1$ prime factors such that every nontrivial divisor of $k$ is bad, a contradiction.
\end{proof}

As a consequence, any set of sieve weights which work well for the ordinary sifting scenario is necessarily somewhat intricate, with the support of the sieve weights depending on the sizes of the prime factors as well as their number.

Finally, there is the famous \emph{parity problem} identified by Selberg \cite{selberg} in the case where all the $\kappa_p$ are $1$. The analysis goes roughly as follows. First, by analyzing the identity
\[
\prod_p \Big(1 - \frac{1}{p}\Big) \sum_k \frac{\theta(k)}{k} = \sum_d \frac{\lambda_d}{d}
\]
used in the proof of the previous proposition, we see that if the ratio $\frac{\sum_d \lambda_d/d}{\prod_p (1 - 1/p)}$ is reasonably small, then the average values of the $\theta(k)$s (weighted by $\frac{1}{k}$) must also be small. By the M\"obius inversion formula, the $|\lambda_d|$s must be small on average as well.

So long as the support of the $\lambda_d$s is reasonable, this means that the contribution of the remainder terms in our bound
\[
|X| \le \Big(\sum_d \frac{\lambda_d}{d}\Big)|Z| + \sum_d |\lambda_d| R_d.
\]
is small compared to the main term even if the remainders get somewhat large, especially if we consider using the same collection of sieve weights on a slightly larger set $Z$.

The fact that the bounds do not depend much on the remainder terms means that the linear framework can't make very effective use of tight bounds on the remainder terms $R_d$ - bounds which are $O(1)$ do not lead to substantially better asymptotics than bounds which are $O((|Z|/d)^{1-\epsilon})$. So we see that it is possible to perturb the set $Z$ by removing all numbers from $Z$ which have an even number of prime factors and double counting the rest, without changing the fact that the remainder terms $R_d$ are $O((|Z|/d)^{1-\epsilon})$. Since the upper bound applies to the perturbed case as well, the best possible upper bounds in this framework are necessarily a factor of $2$ larger than the true bound if the support of the sieve weights is reasonable, even if the remainders are completely nonexistent.

\section{The semidefinite framework}\label{s-framework}

The main idea is to try to prove an inequality of the form
\[
\Big(\sum_{i \in X} x_i\Big)^2 \le \lambda\sum_{i \in X} x_i^2,
\]
where $X$, the support of variables $x_i$, avoids at least one part of each partition $P_i$. Taking $x_i = 1$ for $i \in X$, this will prove the inequality $|X| \le \lambda$.

In order to prove an inequality of this form, we try to write the expression $\lambda\sum_{i} x_i^2 - (\sum_{i} x_i)^2$ as a sum of ``obvious'' inequalities $Q^d(\overline{x}) \ge 0$, where the quadratic form $Q^d$ has the property that expressions of the form
\[
Q^d(x_1,x_2, 0, x_4, ...)
\]
are positive semidefinite, where $0$s have been inserted into indices which fall into the parts of the partitions corresponding to $d$ which we are avoiding (recall that we are using a multiplicative notation, so $d$ corresponds to a collection of partitions), for every way of choosing one part of each partition from $d$ to avoid. When dealing with the Larger Sieve, it will also be helpful to include an additional inequality of the form $Q^0(\overline{x}) \ge 0$, where the only assumption on $Q^0$ is that its coefficients are nonnegative.

When manipulating quadratic forms on a computer, it is natural to organize their coefficients into symmetric matrices. To each quadratic form $Q^d$ we associate a symmetric matrix $B^d$ - we think of the system of matrices $B^d$ as an analogue of a system of sieve weights $\lambda_d$ from the linear framework. Let $J$ be the all-ones matrix. Our proof of an upper bound on $X$ will now take the form of a chain of inequalities:
\[
x^TJx \le \sum_d x^TB^dx + x^TJx = \lambda x^Tx,
\]
where each $x^TB^dx \ge 0$ can be easily checked in isolation.

Note that for any $B,x$ we have $x^TBx = \Tr(Bxx^T)$, so it is natural to write $A = xx^T$ and rewrite our proof in the form
\[
\Tr(JA) \le \sum_d \Tr(B^dA) + \Tr(JA) = \lambda\Tr(A).
\]
In this more general form, the positive semidefinite matrix $A$ has a new interpretation. We can imagine having a probability distribution $\mu$ on the collection of all subsets $X \subseteq Z$, and let $A$ be the covariance matrix:
\[
A_{ij} = \bP_\mu[i \in X\ \wedge\ j \in X].
\]
To capture the fact that the unknown set $X$ must avoid at least one part of each partition, we require that for many choices of $d \subseteq \cP$, the matrix $A$ is in the convex hull of matrices of the form $xx^T$, where the vector $x$ is supported on a set which avoids at least one part of each partition $p \in d$.

An important special case to think about is the case where we just have one partition, and each part of the partition has size $1$. For the sake of concreteness, imagine that our universe $Z$ is $\ZZ/p$ for some $p$. We define two closed convex cones:
\begin{align*}
\mathcal{A}_p &= \Conv\{xx^T\text{ s.t. }x \in \RR^p,\ x_i = 0\text{ for some }i\in \ZZ/p\},\\
\mathcal{B}_p &= \{B \in \RR^{p\times p}\text{ s.t. }B|_{Z\setminus \{i\}}\succeq 0\text{ for all }i\in \ZZ/p\},
\end{align*}
where $B|_{Z\setminus \{i\}}$ is a submatrix of $B$ obtained by deleting the $i$th row and column of $B$, and $M \succeq 0$ means that the matrix $M$ is positive semidefinite.

\begin{prop} The cones $\cA_p, \cB_p$ defined above are dual:
\begin{align*}
B \in \cB_p &\iff \forall A \in \cA_p,\ \Tr(AB) \ge 0,\\
A \in \cA_p &\iff \forall B \in \cB_p,\ \Tr(AB) \ge 0.
\end{align*}
Furthermore, for any matrices $A,B$ we can efficiently check whether $B \in \cB_p$ or $A \in \cA_p$. For $p \ge 3$, both $\cA_p$ and $\cB_p$ contain a neighborhood of the identity matrix.
\end{prop}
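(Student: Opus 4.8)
The plan is to establish the three claims in turn: duality of the cones, efficient membership testing, and the fact that both cones have nonempty interior (containing a neighborhood of $I$) for $p \ge 3$.

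For the duality, I would first observe that $\cA_p$ and $\cB_p$ are both closed convex cones by construction, so by the biduality theorem for closed convex cones it suffices to prove just one of the two stated equivalences; the other follows because the dual of $\cA_p$ is $\cB_p$ and $\cA_p$ is closed. So the core is to show $B \in \cB_p \iff \Tr(AB) \ge 0$ for all $A \in \cA_p$. The ``$\Leftarrow$'' direction: if $\Tr(AB)\ge 0$ for all $A\in\cA_p$, then in particular for each $i$ and each vector $x$ with $x_i=0$ we have $\Tr(xx^T B) = x^TBx \ge 0$, which says exactly that $B|_{Z\setminus\{i\}}\succeq 0$; hence $B\in\cB_p$. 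The ``$\Rightarrow$'' direction: since $\cA_p$ is the convex hull (and conical hull) of the generators $xx^T$ with some coordinate of $x$ vanishing, and $\Tr(\cdot\, B)$ is linear, it suffices to check $\Tr(xx^TB)=x^TBx\ge0$ for each such generator, which is immediate from $B|_{Z\setminus\{i\}}\succeq 0$ once $x_i = 0$. The only subtlety is that $\cA_p$ as defined is a convex hull of a non-compact set, so I should note that it is nonetheless closed — this follows because each ``face'' $\{xx^T : x_i = 0\}$ generates a closed cone (the PSD cone of the $i$-th principal submatrix, embedded back into $\RR^{p\times p}$), and $\cA_p$ is the sum of finitely many closed cones, each contained in the PSD cone, so the sum is closed (a finite sum of closed cones whose only common linear subspace is $\{0\}$, or more simply bounded-intersection cones, is closed).

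For efficient checkability: membership $B\in\cB_p$ is just $p$ positive-semidefiniteness tests on $(p-1)\times(p-1)$ matrices, each doable in polynomial time (e.g.\ Cholesky with pivoting, or eigenvalue computation to any needed precision). Membership $A\in\cA_p$ is the harder of the two; here I would use the duality just proven to recast it as a semidefinite feasibility problem. Concretely, $A\in\cA_p$ iff $A$ can be written as $\sum_{i} A_i$ where each $A_i\succeq 0$ is supported on the index set $Z\setminus\{i\}$; this is a small semidefinite program (variables the $A_i$, with linear constraints $\sum_i A_i = A$ and the zero-pattern constraints, and conic constraints $A_i\succeq 0$), solvable in polynomial time. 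Alternatively one can test the dual condition $\Tr(AB)\ge 0$ for all $B\in\cB_p$ directly as an SDP. I would present the decomposition formulation as the cleanest.

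For the interior claim, I would exhibit $I$ as an interior point of both cones when $p\ge 3$. For $\cB_p$: each principal submatrix $I|_{Z\setminus\{i\}}$ is the $(p-1)\times(p-1)$ identity, which is positive \emph{definite}, so small symmetric perturbations of $I$ keep all $p$ submatrices positive definite — this gives a neighborhood of $I$ inside $\cB_p$ (and needs no restriction on $p$). For $\cA_p$: by the duality, $\cA_p$ contains a neighborhood of $I$ iff $I$ lies in the interior of $\cA_p$, equivalently iff $\Tr(IB) = \Tr(B) > 0$ for every nonzero $B$ on the boundary of $\cB_p$ — or more directly, it suffices to write $I$ as a positive combination of generators in a ``robust'' way. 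I would do this explicitly: for each $i\in\ZZ/p$ let $P_i = I - e_ie_i^T$ be the projection killing the $i$-th coordinate, which lies in $\cA_p$ (it is an average of rank-one matrices $xx^T$ with $x_i=0$, or directly a sum of $e_je_j^T$ over $j\ne i$). Then $\sum_{i\in\ZZ/p} P_i = (p-1)I$, so $I = \frac{1}{p-1}\sum_i P_i$ is a positive combination of points of $\cA_p$. To upgrade this to interiority I need $\cA_p$ to be full-dimensional in the space of symmetric matrices near $I$; here is where $p\ge 3$ enters — when $p\ge 3$, for any pair $j\ne k$ one can find an index $i$ with $i\ne j,k$, so the rank-one matrices $(e_j\pm e_k)(e_j\pm e_k)^T$ all lie in $\cA_p$, and together with the $e_je_j^T$ these span all of symmetric $p\times p$ space; hence $\cA_p$ has nonempty interior, and since $I$ is a positive combination of generators with all ``directions'' available, $I$ is in that interior. (For $p=2$ this fails: $\cA_2$ consists of matrices of the form $\mathrm{diag}(a,0)+\mathrm{diag}(0,b)$ plus nothing off-diagonal from a single vanishing coordinate — it is not full-dimensional.)

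The main obstacle I expect is the interiority half of the third claim — specifically, making the ``robust positive combination'' argument rigorous, i.e.\ verifying that the generators of $\cA_p$ span the full symmetric space when $p\ge 3$ and organizing the perturbation so that $I$ genuinely has a neighborhood inside the conical hull rather than merely lying in its relative interior. The duality and the efficient-checking parts are essentially formal once the closedness of $\cA_p$ is noted.
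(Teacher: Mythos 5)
Your proof takes essentially the same route as the paper's: the duality argument via $x^TBx = \Tr(xx^TB)$, the decomposition characterization $A = \sum_i X_i$ with each $X_i\succeq 0$ having zero $i$th row and column (used both to establish closedness of $\cA_p$ and to give an SDP for membership testing), and the observation that $\cB_p$ contains a neighborhood of $I$ because the $(p-1)\times(p-1)$ identity submatrices are positive \emph{definite}. Your closedness argument appeals to a general lemma about sums of closed cones sitting inside the pointed PSD cone; the paper proves the same thing by hand, noting that $0 \preceq X_i \preceq A$ bounds each $X_i$ — but this is the same underlying fact, so no complaint there.

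The one real gap — which you honestly flag at the end — is in showing $I$ is interior to $\cA_p$. Expanding your decomposition gives $I = \frac{1}{p-1}\sum_i P_i = \sum_j e_je_j^T$, which uses only the diagonal rank-one generators, and these span only the diagonal matrices; so this decomposition by itself does not put $I$ in the interior. Full-dimensionality of $\cA_p$ (which you correctly extract from the generators $(e_j\pm e_k)(e_j\pm e_k)^T$ when $p\ge 3$) does not automatically promote this particular positive combination to an interior point. The clean fix — the one the paper uses — is to note that each summand $\frac{1}{p-1}I|_{Z\setminus\{i\}}$ is positive \emph{definite} on $Z\setminus\{i\}$, hence it sits inside a whole neighborhood (within the symmetric matrices supported on $Z\setminus\{i\}$) of admissible $X_i$'s; for $p\ge 3$ these subspaces jointly span the full space of symmetric $p\times p$ matrices, so the Minkowski sum of the neighborhoods is a genuine neighborhood of $I$. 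Alternatively, perturb your decomposition by adding each $(e_j\pm e_k)(e_j\pm e_k)^T$ with a small positive weight $\epsilon$ and compensating with $-2\epsilon(e_je_j^T+e_ke_k^T)$ on the diagonal, so that $I$ becomes a strictly positive combination of generators spanning the full space. Either way it is a one-sentence patch, but as written your argument stops just short of closing it.
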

\begin{proof} A matrix $B$ is in $\cB_p$ iff for every $i$ and every $x \in \RR^p$ with $x_i = 0$, we have $x^TBx \ge 0$, and from $x^TBx = \Tr(xx^TB)$ we get the first duality. We can check whether $B \in \cB_p$ by computing Cholesky decompositions of the matrices $B|_{Z\setminus \{i\}}$ for each $i \in \ZZ/p$, this takes time roughly $O(p\cdot p^3) = O(p^4)$.

The statements about $\cA_p$ can be deduced from the statements about $\cB_p$ by conic duality, so long as we can show that $\cA_p$ is closed. Concretely, a matrix $A$ is in $\cA_p$ iff there are matrices $X_1, ..., X_p$ such that
\begin{itemize}
\item $X_i \succeq 0$ for all $i$,

\item the $i$th row and column of $X_i$ is all $0$s, and

\item $A = \sum_i X_i$.
\end{itemize}
Since each $X_i \preceq A$, if $A$ is from a bounded set then the possible $X_i$s which we might consider also come from from a bounded set, and this together with the fact that $X_i \succeq 0$ and $\sum_i X_i = A$ are closed conditions imply that $\cA_p$ is closed.

We can check whether a matrix $A \in \cA_p$ by solving the semidefinite program corresponding to the three bullet points above. Alternatively, we can check $A \in \cA_p$ by minimizing $\Tr(AB)$ over $B \in \cB_p$ (perhaps with some extra constraint such as $\|B\| \le 1$).

That $\cB_p$ contains a neighborhood of the identity matrix is clear from the definition. For $\cA_p$, we use the fact that the set of $X_i$ as in the bullet points forms a neighborhood of $\frac{1}{p-1}I|_{Z\setminus\{i\}}$ in $\RR^{(p-1)\times(p-1)}$, and the sum of these neighborhoods forms a neighborhood of $I$ for $p \ge 3$.
\end{proof}

Returning to the general case, we can define cones $\cA_d, \cB_d$ for any set $d$ of partitions from $\cP$.

\begin{defn} For $d = \{p_1,...,p_k\} \subseteq \cP$, where $\cP$ is a set of partitions of $Z$, we define the cones $\cA_d, \cB_d$ by
\begin{align*}
\mathcal{A}_d &= \Conv\{xx^T\text{ s.t. }x \in \RR^Z,\ \forall p_i \in d\ \exists c_i \in p_i\text{ with }x_j = 0\text{ for }j\in c_i\},\\
\mathcal{B}_d &= \{B \in \RR^{Z\times Z}\text{ s.t. }\forall c_1 \in p_1, ..., c_k \in p_k\ B|_{Z\setminus (c_1\cup \cdots \cup c_k)}\succeq 0\},
\end{align*}
where $B|_{Z\setminus (c_1\cup \cdots \cup c_k)}$ is the matrix formed by deleting all rows and columns in $c_1\cup \cdots \cup c_k$ from $B$.
\end{defn}

A similar argument to the previous proposition shows that $\cA_d, \cB_d$ are dual for each $d$. The general result we will use is the following application of semidefinite duality.

\begin{thm}\label{duality} If $D = \{d_1, ...\}$ is any collection of subsets of $\cP$, $\cP$ a set of partitions of $Z$, then
\begin{equation}
\max_{A \in \bigcap_{d \in D} \cA_d} \frac{\Tr(AJ)}{\Tr(A)} = \min_{B^{d_i} \in \cB_{d_i}} \big\|\sum_{d\in D} B^d + J\big\|_{op}.\tag{$*$}\label{target}
\end{equation}
The minimum is attained with a collection of matrices $B^{d_i}$ such that $\sum_{d \in D} B^d + J$ is a multiple of the identity matrix.
\end{thm}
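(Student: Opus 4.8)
The plan is to read off the two sides of \eqref{target} as the optimal values of a dual pair of conic (semidefinite) programs. Since $\cA_d$ is a cone and every nonzero $A$ in $C:=\bigcap_{d\in D}\cA_d$ is positive semidefinite (hence has $\Tr A>0$), the ratio $\Tr(AJ)/\Tr(A)$ is scale invariant and the left side of \eqref{target} equals $\lambda^\ast:=\max\{\Tr(AJ):A\in C,\ \Tr(A)=1\}$, attained because $\{A\in C:\Tr(A)=1\}$ is compact. Note that $\lambda^\ast\ge1$, via the feasible point $A=\tfrac1{|Z|}I$ together with the observation that $I\in\cA_d$: one writes $I$ as the sum, over all ways $\sigma=(c_1,\dots,c_k)$ of picking one part from each partition in $d=\{p_1,\dots,p_k\}$, of $\prod_i(|p_i|-1)^{-1}\operatorname{diag}(1_{Z\setminus(c_1\cup\cdots\cup c_k)})$.

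First I would record weak duality. Fix $A\in C$ with $\Tr(A)=1$ and $B^d\in\cB_d$ for each $d\in D$, and set $M:=\sum_{d\in D}B^d+J$. Since $\cA_d$ and $\cB_d$ are dual, $\Tr(AB^d)\ge0$, so $\Tr(AM)\ge\Tr(AJ)$; and since $A\succeq0$ with $\Tr(A)=1$, also $\Tr(AM)\le\lambda_{\max}(M)\le\|M\|_{op}$. Taking the supremum over $A$ gives $\|M\|_{op}\ge\lambda^\ast$ for every admissible collection $(B^d)$, so the right side of \eqref{target} is $\ge\lambda^\ast$.

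For the reverse inequality and the attainment claim I would form the Lagrangian dual of the program defining $\lambda^\ast$. Relaxing the constraint $\Tr(A)=1$ with a real multiplier $t$, the dual is $\inf\{t:\ tI-J\in C^\ast\}$; and $tI-J\in C^\ast$ means exactly that there exist $B^d\in\cB_d$ with $\sum_{d\in D}B^d+J=tI$, so the dual program automatically produces collections whose sum with $J$ is a multiple of $I$. Two structural facts finish things. First, $C^\ast=\sum_{d\in D}\cB_d$ with no closure needed: each $\cA_d$ contains $I$ in its relative interior, so the relative interiors meet, whence $(\bigcap_d\cA_d)^\ast=\sum_d\cA_d^\ast=\sum_d\cB_d$; being a dual cone, this sum is closed, so $\{t:tI-J\in\sum_d\cB_d\}$ is closed. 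Second, the dual is strictly feasible: for $t>|Z|$ the matrix $tI-J$ is positive definite, and the positive‑definite cone lies in the interior of each $\cB_d$ (as $\cB_d\supseteq\{B\succeq0\}$), hence in the interior of $\sum_d\cB_d$. Strict feasibility of the dual yields strong duality (no gap), so $\lambda^\ast=\inf\{t:tI-J\in\sum_d\cB_d\}$; this infimum is over a closed set that is bounded below (any feasible $t$ has $t\ge1$, since taking traces in $tI=\sum_dB^d+J$ and using that elements of $\cB_d$ have nonnegative diagonal gives $t|Z|\ge|Z|$), so it is attained at some $t^\ast=\lambda^\ast$. A witnessing collection then has $\sum_{d\in D}B^d+J=\lambda^\ast I$, and $\|\lambda^\ast I\|_{op}=\lambda^\ast$; combined with weak duality this forces the right side of \eqref{target} to equal $\lambda^\ast$, attained by a collection with $\sum_dB^d+J$ a multiple of $I$.

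The main obstacle is just the bookkeeping of the interior/closedness hypotheses above, and within that the one genuinely delicate point: when some partition $p\in d$ has only two parts, $\cA_d$ has empty interior in the symmetric matrices (any matrix entry indexed by two elements of $Z$ lying in different parts of $p$ vanishes identically on $\cA_d$), so the first fact must be run with relative interiors throughout — one still checks $I\in\operatorname{ri}(\cA_d)$ from the Minkowski‑sum description of $\cA_d$ in the previous proposition together with the identity $\operatorname{ri}(\sum_\sigma S_\sigma)=\sum_\sigma\operatorname{ri}(S_\sigma)$ for finitely many convex sets $S_\sigma$. If instead one assumes every partition occurring in $D$ has at least three parts, each $\cA_d$ has nonempty interior, both programs are strictly feasible, and no‑gap plus two‑sided attainment is entirely standard. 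Finiteness of $D$ is automatic, since $\cP$ and hence $2^{\cP}$ is finite.
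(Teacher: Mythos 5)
Your proof is correct, and it takes a genuinely different route through the one delicate step. Both you and the paper begin with the same weak duality bound $\Tr(AJ)\le\Tr\big((\sum_d B^d+J)A\big)\le\|\sum_d B^d+J\|_{op}\Tr(A)$ and then reduce the reverse inequality to showing that $\big(\bigcap_d\cA_d\big)^\ast=\sum_d\cB_d$ (rather than merely its closure) and that the resulting dual program attains its infimum. The paper handles this by proving directly that $\sum_{d\in D}\cB_d$ is a closed cone, via an entrywise boundedness argument (nonnegative diagonals, off-diagonals controlled by diagonals), with a separate case analysis for partitions with only two parts, where some entries of $B^d$ are unconstrained and must be quotiented out. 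You instead invoke a Slater-type relative interior condition: you verify $I\in\operatorname{ri}(\cA_d)$ for every $d$ from the Minkowski-sum description $\cA_d=\sum_\sigma\{X\succeq0:\ X\text{ supported on }Z\setminus\sigma\}$, so $\bigcap_d\operatorname{ri}(\cA_d)\ne\emptyset$, which yields $\big(\bigcap_d\cA_d\big)^\ast=\sum_d\cA_d^\ast=\sum_d\cB_d$ with closedness for free (it is a dual cone). Your approach neatly sidesteps the two-part case analysis entirely --- you correctly flag that for two-part partitions $\cA_d$ has empty interior so the argument must be run with relative interiors, but the relative interior calculation is uniform. Your closing attainment argument (the feasible set of the scalar $t$ is a closed, nonempty, bounded-below upward ray) is also a clean substitute for the paper's appeal to the closedness of $\sum_d\cB_d$ at this point. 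The net effect is the same theorem, reached by replacing a hands-on closedness lemma with a standard conic-duality regularity condition; the trade-off is that your route relies on the off-the-shelf facts $\operatorname{ri}\big(\sum_\sigma S_\sigma\big)=\sum_\sigma\operatorname{ri}(S_\sigma)$ and $\big(\bigcap_i K_i\big)^\ast=\sum_i K_i^\ast$ under $\bigcap_i\operatorname{ri}(K_i)\ne\emptyset$, whereas the paper's argument is self-contained but rougher around the two-part edge case.
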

\begin{proof} First we show that the left hand side is at most the right hand side. If $A \in \bigcap_{d\in D} \cA_d$, then for each $B^{d_i} \in \cB_{d_i}$ we have $\Tr(AB^{d_i}) \ge 0$, so
\[
\Tr(AJ) \le \sum_{d\in D} \Tr(AB^d) + \Tr(AJ) = \Tr\Big(\Big(\sum_{d\in D} B^d + J\Big)A\Big) \le \big\|\sum_{d\in D} B^d + J\big\|_{op}\Tr(A),
\]
where the last inequality follows from the fact that $|\Tr(MA)| \le \|M\|_{op}\Tr(A)$ for any square matrix $M$ and any positive semidefinite matrix $A$ (to see this, just represent $A$ as a sum of outer products $xx^T$).

For the other direction, we will use the fact that the cone $\bigcap_{d\in D} \cA_d$ is dual to the cone $\sum_{d \in D} \cB_d$, which will follow from the fact that each pair $\cA_{d_i}, \cB_{d_i}$ are dual as long as we can show that $\sum_{d \in D} \cB_d$ is closed. In the case where no partition $p \in d_i \in D$ has just two parts, this follows from the facts that each $B^{d_i} \in \cB_{d_i}$ has nonnegative entries along the diagonal and that the off-diagonal entries are bounded in terms of the diagonal entries. If some $p \in d_i \in D$ has just two parts, then some of the entries of matrices $B^{d_i} \in \cB_{d_i}$ can take arbitrary values independently of the other entries, so these entries may be ignored, and all other entries of $B^{d_i}$ are again bounded in terms of the diagonal entries.

Note that since $\Tr(A) \ge 0$ for $A \in \bigcap_{d\in D} \cA_d$, the maximum $\max_{A \in \bigcap_{d \in D} \cA_d} \frac{\Tr(AJ)}{\Tr(A)}$ is the same as
\[
\max_{\substack{A \in \bigcap_{d \in D} \cA_d,\\ \Tr(A) = 1}} \Tr(AJ),
\]
and since the set $\{A \in \bigcap_{d \in D} \cA_d,\ \Tr(A) = 1\}$ is compact and nonempty, the maximum is actually attained. By the duality between $\bigcap_{d\in D} \cA_d$ and $\sum_{d \in D} \cB_d$, the maximum above is equal to the minimum $\lambda \in R$ such that $\lambda I - J \in \sum_{d\in D} \cB_d$ (and this minimum is attained because $\sum_{d\in D} \cB_d$ is closed). For this minimum value of $\lambda$, there exist $B^{d_i}$ with
\[
\sum_{d\in D} B^d + J = \lambda I.\qedhere
\]
\end{proof}

\begin{cor} If some $d \in D$ is equal to all of $\cP$, then \eqref{target} is equal to the size of the largest set $X \subseteq Z$ which avoids at least one part of each partition $p \in \cP$.
\end{cor}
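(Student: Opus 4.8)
The plan is to show that the common value of \eqref{target} equals $N$, the size of the largest $X \subseteq Z$ avoiding at least one part of every partition $p \in \cP$, by establishing the two inequalities $\eqref{target} \ge N$ and $\eqref{target} \le N$ separately. By Theorem~\ref{duality} I may work with the left-hand side $\max_{A \in \bigcap_{d\in D} \cA_d} \frac{\Tr(AJ)}{\Tr(A)}$, and, exactly as in the proof of that theorem, this maximum is attained at some $A$ with $\Tr(A) = 1$.

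For $\eqref{target} \ge N$ I would take an optimal set $X$, let $x \in \RR^Z$ be its indicator vector, and set $A = xx^T$. Every $d = \{p_1, \dots, p_k\} \in D$ is a subset of $\cP$, so for each $p_i$ the set $X$ misses some part $c_i \in p_i$, i.e.\ $x_j = 0$ for all $j \in c_i$; hence $A \in \cA_d$ for every $d \in D$, and therefore $A \in \bigcap_{d\in D}\cA_d$. Since $\Tr(AJ) = \big(\sum_{i\in X} 1\big)^2 = |X|^2$ and $\Tr(A) = |X|$, the ratio is exactly $|X| = N$. (Here $N \ge 1$, so $A \neq 0$; as elsewhere in the paper we tacitly assume each partition has at least two parts.)

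For $\eqref{target} \le N$ I would use the hypothesis $\cP \in D$: it forces $\bigcap_{d\in D}\cA_d \subseteq \cA_\cP$, so it is enough to bound the ratio on $\cA_\cP$. Writing a general $A \in \cA_\cP$ as $A = \sum_k \alpha_k\, y_k y_k^T$ with $\alpha_k \ge 0$ and each $y_k$ supported on a set $S_k$ that avoids a part of every partition in $\cP$, we have $|S_k| \le N$, and Cauchy--Schwarz applied to $y_k$ on $S_k$ gives $\big(\sum_i (y_k)_i\big)^2 \le |S_k|\,\|y_k\|^2 \le N\,\|y_k\|^2$; summing against the $\alpha_k$ yields $\Tr(AJ) \le N\,\Tr(A)$, hence $\eqref{target} \le N$.

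Combining the two bounds proves the corollary. I do not expect a genuine obstacle: the statement is essentially a bookkeeping consequence of the definition of $\cA_d$ together with Theorem~\ref{duality}, the only substantive move being the Cauchy--Schwarz step, combined with the observation that the rank-one pieces of an element of $\cA_\cP$ are supported precisely on sets of the type whose maximal size we wish to bound. The one situation needing a word of care is the degenerate case where some partition consists of a single part, so that $N = 0$: then $\cA_\cP = \{0\}$, the feasible region $\{A \in \bigcap_d \cA_d : \Tr(A) = 1\}$ is empty, and one must read the left side of \eqref{target} as $0$; the right side is $0$ too, since the corresponding cone $\cB_\cP$ is then all of $\RR^{Z\times Z}$.
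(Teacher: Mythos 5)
Your proof is correct and follows essentially the same route as the paper: the key step in both is decomposing $A \in \cA_\cP$ into rank-one pieces $y y^T$ supported on admissible sets and applying Cauchy--Schwarz to each. You merely spell out the easy lower-bound direction (taking $A = xx^T$ for the indicator of an optimal $X$) and the degenerate $N=0$ case, which the paper leaves implicit.
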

\begin{proof} We just have to check that for any $A \in \cA_{\cP}$, we have $\Tr(AJ) \le \lambda \Tr(A)$, where $\lambda$ is the size of the largest such set $X$. Since each $A \in \cA_{\cP}$ can be written as a sum of rank one matrices $xx^T$ with $x$ supported on a set $X$ which avoids at least one part of each partition $p \in \cP$, we just need to show that if $x$ is supported on $X$ then $x^TJx \le |X|x^Tx$, that is, that $(\sum_{i\in X} x_i)^2 \le |X|\sum_{i \in X}x_i^2$. This follows from Cauchy-Schwarz.
\end{proof}

Of course, it is impractical to compute the sets $\cA_\cP, \cB_\cP$ if $\cP$ is large. So our strategy is to pick some collection $D = \{d_1, ...\}$ of \emph{small} subsets of $\cP$, and hope that the intersection $\bigcap_{d\in D} \cA_d$ is a good approximation to $\cA_{\cP}$. Then \eqref{target} can be efficiently computed to any desired accuracy (in theory), and sets of witnessing matrices $B^{d_i}$ can be produced to give a proof of an upper bound.

\begin{prop} If $D$ is any collection of subsets of $\cP$, then \eqref{target} can be computed to any desired accuracy $\epsilon$ in time polynomial in $\log(1/\epsilon)$, $|Z|$, and $\sum_{d \in D} \prod_{p \in d} |p|$.
\end{prop}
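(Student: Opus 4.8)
The plan is to recognize the right-hand side of \eqref{target} as the optimum of a semidefinite program whose description has size polynomial in $|Z|$ and $\sum_{d\in D}\prod_{p\in d}|p|$, and then to appeal to the classical fact that such a program can be solved to additive accuracy $\epsilon$ in time polynomial in its description length and $\log(1/\epsilon)$. Concretely, in epigraph form the program reads: minimize $t$ subject to $B^d\in\cB_d$ for each $d\in D$, box constraints $|B^d_{ab}|\le M$, and $tI\pm\big(\sum_{d\in D}B^d+J\big)\succeq 0$, where $M$ is a polynomial in $|Z|$ and $|D|$ to be fixed below. The variables are $t$ together with the entries of the symmetric matrices $B^d$, in all $O(|D|\,|Z|^2)$ reals, and since $\prod_{p\in d}|p|\ge 1$ we have $|D|\le\sum_{d\in D}\prod_{p\in d}|p|$, so this is within budget. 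By the definition of $\cB_d$, the membership $B^d\in\cB_d$ is exactly the conjunction of the positive-semidefiniteness conditions on the submatrices $B^d|_{Z\setminus(c_1\cup\cdots\cup c_k)}$ over all choices of parts $c_1\in p_1,\dots,c_k\in p_k$ of $d=\{p_1,\dots,p_k\}$, i.e.\ of $\prod_{p\in d}|p|$ conditions, each on a matrix of order $\le|Z|$; together with the box and the two epigraph constraints, the program has $O\big(|D|\,|Z|^2 + \sum_{d\in D}\prod_{p\in d}|p|\big)$ scalar constraints and $2 + \sum_{d\in D}\prod_{p\in d}|p|$ semidefinite blocks of order $\le|Z|$.

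Next I would verify the hypotheses needed to run the ellipsoid method (in the weak-optimization-from-weak-separation form of Gr\"otschel, Lov\'asz, and Schrijver): a polynomial-time separation oracle for the feasible region and a polynomial bound on its diameter. A separation oracle is immediate, and it is here that the parameter $\sum_{d\in D}\prod_{p\in d}|p|$ genuinely enters: given a candidate point, first test the box constraints, then for each $d\in D$ attempt a Cholesky decomposition of each of the $\prod_{p\in d}|p|$ submatrices (and handle the two epigraph blocks similarly), and on any failure read off a separating hyperplane $\langle\,\cdot\,,vv^T\rangle\ge 0$ from an eigenvector $v$ of negative eigenvalue; the total cost is $O\big(\big(\sum_{d\in D}\prod_{p\in d}|p|\big)|Z|^3\big)$. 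Boundedness is forced by the box constraints, and the remaining point is to choose $M$ polynomially large while ensuring the optimum is not cut off. For this I would use Theorem~\ref{duality}: some optimizer has $\sum_{d\in D}B^d+J=\lambda I$ with $\lambda\le|Z|$ (e.g.\ because $\Tr(AJ)\le\|J\|_{op}\Tr(A)=|Z|\Tr(A)$ for $A\succeq 0$). On such an optimizer every diagonal entry of every $B^d$ is nonnegative — each diagonal position survives in some submatrix, since every partition has at least two parts — and their sum over $d$ is $\lambda-1\le|Z|$, so $0\le B^d_{aa}\le|Z|$; hence $|B^d_{ab}|\le|Z|$ at every position that $\cB_d$ actually constrains.

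The step I expect to be the real obstacle is the degeneracy caused by partitions with exactly two parts: when $p$ has two parts, membership in $\cB_{\{p\}}$ leaves certain off-diagonal entries of a matrix completely unconstrained, so there is no a priori bound on those entries and the box argument of the previous paragraph does not directly apply to them. I would dispose of these free entries by redistribution before fixing $M$: on the optimizer furnished by Theorem~\ref{duality} the off-diagonal entry $\big(\sum_{d\in D}B^d+J\big)_{ab}$ vanishes, so the free contributions at position $(a,b)$ can all be moved onto a single designated matrix, whose entry there then equals $-1$ minus a sum of at most $|D|$ entries each of absolute value $\le|Z|$, and so is bounded by $1+|D|\,|Z|$ in absolute value. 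Taking $M=|D|\,|Z|+|Z|+1$ therefore keeps an optimizer feasible while bounding the diameter polynomially. With a polynomial-time separation oracle and a polynomial diameter bound in hand, the Gr\"otschel, Lov\'asz, and Schrijver reduction of weak optimization to weak separation yields the value of \eqref{target} to accuracy $\epsilon$ in time polynomial in $\log(1/\epsilon)$, $|Z|$, and $\sum_{d\in D}\prod_{p\in d}|p|$, as claimed.
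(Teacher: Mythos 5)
Your proof is correct and takes essentially the same approach as the paper, which simply cites Khachiyan's ellipsoid-method result for convex programming with a membership oracle and observes that membership in $\cB_d$ is checked via $\prod_{p\in d}|p|$ Cholesky decompositions. You go further than the paper in making explicit the polynomial bound on the feasible region and in handling the degeneracy caused by two-part partitions (which the paper only addresses, in passing, in the proof of Theorem~\ref{duality}); these are genuine technical points that the paper's one-line proof elides, and your treatment of them is sound.
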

\begin{proof} This follows from a general result of Khachiyan \cite{ellipsoid}, which reduces the task of minimizing a convex function over a closed convex set to the task of testing whether a point is in the set. To test whether a matrix $B^d \in \cB_d$, we just need to compute a Cholesky decomposition of $\prod_{p \in d} |p|$ submatrices of $B$.
\end{proof}

All of the above results and definitions can be extended to the case where we sieve out $\kappa_p$ parts of the partition $p$ instead of just sieving out one part, but there is an annoying additional complication if $\kappa_p$ is not bounded. To understand this complication, we again reduce to a simple case where we have just one partition $p$, where each part of $p$ has just one element. We define closed convex cones
\begin{align*}
\mathcal{A}_{p,\kappa} &= \Conv\{xx^T\text{ s.t. }x \in \RR^p,\ x_{i_1} = \cdots = x_{i_\kappa} = 0\text{ for some distinct }i_1, ..., i_\kappa\in p\},\\
\mathcal{B}_{p,\kappa} &= \{B \in \RR^{p\times p}\text{ s.t. }B|_{Z\setminus \{i_1,...,i_\kappa\}}\succeq 0\text{ for all distinct }i_1, ..., i_\kappa\in p\},
\end{align*}
and note that these are again dual to each other.

\begin{prop} If $p$ and $\kappa$ are allowed to have the same order of magnitude, then it is NP-hard to test whether a given matrix $B$ is contained in $\cB_{p,\kappa}$.
\end{prop}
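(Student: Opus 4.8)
The plan is to prove hardness by a polynomial-time reduction from $k$-CLIQUE. Given a graph $G$ on $n$ vertices and an integer $k$ (which we may assume satisfies $3 \le k \le n$, the remaining cases being trivial), the reduction will output a symmetric matrix $B$ together with values $p$ and $\kappa$ so that $G$ has a clique of size $k$ if and only if $B \notin \cB_{p,\kappa}$. Unfolding the definition, $B \notin \cB_{p,\kappa}$ means that there is a subset $S$ of the index set with $|S| = p - \kappa$ such that the principal submatrix $B|_S$ is not positive semidefinite; the whole point of the construction is to arrange, with $p - \kappa = k$, that $B|_S$ fails to be positive semidefinite precisely when the $k$-element set $S$ induces a clique of $G$.

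Here is the construction. First pad $G$ with isolated vertices so that the new vertex set $Z$ has size $p := \max(n, 2k)$; then $k \le p/2$, so with $\kappa := p - k$ we get $p/2 \le \kappa \le p$, i.e.\ $p$ and $\kappa$ have the same order of magnitude as required. Let $A$ be the adjacency matrix of the padded graph (zero diagonal, isolated vertices contributing zero rows and columns), and set
\[
B \;:=\; \Big(k - 1 - \tfrac{1}{k}\Big)\, I \;-\; A .
\]
For a $k$-element $S \subseteq Z$: if $S$ induces a clique of $G$ (no isolated padding vertex lies in a clique of size $\ge 2$, so such an $S$ is a genuine $k$-clique of $G$), then $A|_S = J_k - I_k$, whose largest eigenvalue is exactly $k-1$, and hence $B|_S$ has eigenvalue $-\tfrac1k < 0$; if $S$ does not induce a clique, then the graph it induces is contained in $K_k$ with one edge removed, so $A|_S$ is dominated entrywise by the adjacency matrix of $K_k$-minus-an-edge, whose spectral radius is the larger root $\lambda_\star$ of $\lambda^2 - (k-3)\lambda - 2(k-2) = 0$; a short computation shows $\lambda_\star < k - 1 - \tfrac1k$, so $B|_S \succ 0$. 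Thus $B \notin \cB_{p,\kappa}$ exactly when some $k$-set induces a clique, i.e.\ exactly when $G$ has a $k$-clique.

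The steps, then, are: (1) the padding reduction from general $k$-CLIQUE, recording that the resulting $(p,\kappa)$ are comparable; (2) the monotonicity fact that $\lambda_{\max}(A|_S) \le \lambda_{\max}$ of (the adjacency matrix of) $K_k$-minus-an-edge whenever $S$ is not a clique — this is Perron--Frobenius, since a nonnegative symmetric matrix has a nonnegative top eigenvector, so enlarging the edge set only increases the maximal Rayleigh quotient; (3) the explicit diagonalization of $K_k$-minus-an-edge (via the symmetry-reduced $2\times 2$ eigenproblem pairing the two deficient vertices against the other $k-2$), which gives the displayed quadratic, together with the inequality $\lambda_\star < k - 1 - \tfrac1k$, which reduces to the elementary estimate $\tfrac{k-1}{k^2} < 1$; and (4) the bookkeeping that $B$ has rational entries of bit-length $O(\log k)$ (multiply by $k$ for an integer matrix if preferred), that the reduction runs in polynomial time, and that testing $B|_S \not\succeq 0$ for a given $S$ is itself polynomial — so that deciding $B \notin \cB_{p,\kappa}$ is NP-hard, equivalently testing membership $B \in \cB_{p,\kappa}$ is coNP-hard and in particular has no polynomial-time algorithm unless $\mathrm{P} = \mathrm{NP}$.

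The main obstacle is step (3): one needs a clean closed form for the spectral radius of $K_k$-minus-an-edge and, more importantly, a threshold that is comfortably rational yet lies strictly between this spectral radius and $k-1$. This is exactly where the hypothesis that $p - \kappa = k$ is unbounded genuinely bites: the spectral gap between a $k$-clique and the densest non-clique on $k$ vertices is only $\Theta(1/k)$, so it shrinks as $k$ grows but never closes, which is all that is needed; were $k$ bounded the whole problem would be solvable by brute force over the $\binom{p}{k}$ subsets.
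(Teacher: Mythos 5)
Your reduction is correct, and it is a genuinely different route from the paper's. The paper establishes hardness by citing the NP-hardness of the \emph{best subset selection} problem (finding the sparsest $x$ with $\|Ax - b\| < 1$) and then homogenizing: if $B$ is the matrix of the quadratic form $\|A(x_1,\dots,x_n)^T - b x_0\|^2 - x_0^2$ in the variables $(x_0,\dots,x_n)$, then a sparse vector with negative Rayleigh quotient for $B$ is essentially the same as a sparse approximate solution. That is a two-line argument resting on an external citation. You instead reduce directly from $k$-CLIQUE via a spectral-gap construction: set $B = (k-1-\tfrac1k)I - A$, pad so that $k \le p/2$ and $\kappa = p-k$, and observe that a principal $k\times k$ submatrix of $B$ has a negative eigenvalue precisely when the corresponding vertex set is a clique, because the top eigenvalue of a non-clique on $k$ vertices is bounded by that of $K_k$ minus an edge (Perron--Frobenius monotonicity), which your symmetry-reduced $2\times 2$ eigenproblem pins down as the larger root of $\lambda^2 - (k-3)\lambda - 2(k-2)=0$, strictly below the threshold $k-1-\tfrac1k$ since $\tfrac{k-1}{k^2} < 1$. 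I checked the calculation: substituting $t=k-1-\tfrac1k$ into the quadratic gives exactly $1 - \tfrac{k-1}{k^2}>0$, and the padding-vertex and $k<3$ edge cases are handled. Your approach is self-contained (no citation beyond textbook CLIQUE), gives explicit rational entries of bit-length $O(\log k)$, and illustrates concretely why the hardness requires $p-\kappa$ unbounded (the spectral gap is $\Theta(1/k)$). The paper's approach is shorter and ties the statement more directly to the sparse-recovery literature. Both correctly identify that the result is really coNP-hardness of membership (equivalently NP-hardness of non-membership), which suffices for the ``no polynomial algorithm unless $\mathrm{P}=\mathrm{NP}$'' conclusion the proposition is meant to convey.
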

\begin{proof} Checking whether $B \in \cB_{p,\kappa}$ is the same as determining whether there exists $x \in \RR^p$ having at most $p-\kappa$ nonzero entries and satisfying $x^TBx < 0$. If we could solve this efficiently for all $\kappa$, then we could find the sparsest vector $x$ which satisfies $x^TBx < 0$. We will show this is NP-hard by a reduction from the problem of finding the sparsest vector that approximately solves a linear system, aka the ``best subset selection problem'' (see \cite{sparse-approximate-hardness}).

The reduction from the best subset selection problem to our problem goes as follows: if it is hard to find a sparse $x$ satisfying $\|Ax - b\| < 1$, then it is equally hard to find a sparse vector $(x_0, x_1, ..., x_n)$ satisfying $\|A(x_1, ..., x_n)^T - bx_0\|^2 - x_0^2 < 0$, and the left hand side is a quadratic form in $x_0, ..., x_n$.
\end{proof}

In practice, when $\kappa_p$ is allowed to be large, we get around this difficulty by restricting attention to special matrices $B^p$ which have a symmetry property that allows us to efficiently verify that $B^p \in \cB_{p,\kappa_p}$.

\section{The Large Sieve}\label{s-large}

To write down the Large Sieve compactly, it is efficient to use a multiplicative notation for matrices. We think of our interval $Z$ as a subset of $\prod_p \ZZ/|p|$. We use $I_p, J_p$ for the $p\times p$ identity matrix and matrix of all ones, respectively, and if $M_p \in \RR^{p\times p}, N_q \in \RR^{q\times q}$, then we write $M_p \otimes N_q$ for their tensor product, thought of as a matrix with rows and columns indexed by elements of $\ZZ/|p| \times \ZZ/|q|$. In particular, we have
\[
I_p \otimes I_q = I_{pq}, \;\;\; J_p \otimes J_q = J_{pq}.
\]
The matrices we write down with this multiplicative notation will, strictly speaking, have rows and columns indexed by elements of $\prod_p \ZZ/p$ which lie outside the interval $Z$. We ignore these extra rows and columns - that is, we consider the natural projection map $\RR^{\prod_p p} \rightarrow \RR^Z$ and the inclusion $\RR^Z \rightarrow \RR^{\prod_p p}$ corresponding to $Z \rightarrow \prod_p \ZZ/p$, and pre- and post-compose with these to obtain matrices in $\RR^{Z\times Z}$.

\begin{prop} If $A,B,C,D$ are symmetric matrices with $A \succeq B \succeq 0$ and $C \succeq D \succeq 0$, then $A \otimes C \succeq B \otimes D \succeq 0$.
\end{prop}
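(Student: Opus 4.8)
The plan is to reduce the statement to the single fact that the tensor product of two positive semidefinite matrices is positive semidefinite, and then to interpolate with a two-step chain in the L\"owner order. I would first record the basic lemma: if $M \succeq 0$ and $N \succeq 0$, then $M \otimes N \succeq 0$. To prove it, write $M = \sum_i u_iu_i^T$ and $N = \sum_j v_jv_j^T$ as sums of outer products (possible since $M$ and $N$ are positive semidefinite). Using the mixed-product identity $(u_iu_i^T)\otimes(v_jv_j^T) = (u_i\otimes v_j)(u_i\otimes v_j)^T$, we get $M \otimes N = \sum_{i,j}(u_i\otimes v_j)(u_i\otimes v_j)^T$, which is a sum of outer products and hence positive semidefinite. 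Applying this with $M = B$, $N = D$ immediately gives $B \otimes D \succeq 0$, the second half of the claim.

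For the inequality $A \otimes C \succeq B \otimes D$, I would insert the intermediate matrix $B \otimes C$ and use bilinearity of the tensor product to write
\[
A \otimes C - B \otimes D = (A-B)\otimes C + B \otimes (C-D).
\]
Since $A - B \succeq 0$ and $C \succeq 0$, the basic lemma gives $(A-B)\otimes C \succeq 0$; since $B \succeq 0$ and $C - D \succeq 0$, it gives $B \otimes (C-D) \succeq 0$. A sum of positive semidefinite matrices is positive semidefinite, so $A \otimes C - B \otimes D \succeq 0$, which is what we wanted.

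There is essentially no genuine obstacle here; the only points that require a moment's care are the mixed-product identity $(uu^T)\otimes(vv^T) = (u\otimes v)(u\otimes v)^T$ (so that the tensor of two rank-one positive semidefinite matrices is again a rank-one outer product) and the bilinear expansion of $A\otimes C - B\otimes D$. Both are routine once the outer-product description of positive semidefinite matrices is in hand, so the ``hard part'' is really just organizing the chain $A\otimes C \succeq B\otimes C \succeq B\otimes D$ correctly.
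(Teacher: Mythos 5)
Your argument is correct and matches the paper's proof in substance: both establish the auxiliary lemma by writing positive semidefinite matrices as sums of rank-one outer products, and both conclude via a telescoping bilinear decomposition. The only cosmetic difference is that you interpolate through $B\otimes C$ (writing $A\otimes C - B\otimes D = (A-B)\otimes C + B\otimes(C-D)$) whereas the paper interpolates through $A\otimes D$ (writing $A\otimes(C-D) + (A-B)\otimes D$); the two are equally valid.
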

\begin{proof} To see that the tensor product of positive semidefinite matrices is positive semidefinite, just note that every positive semidefinite matrix is a positive combination of rank one matrices of the form $xx^T$, and that tensor products of such matrices are again of that form. To finish, note that
\[
A\otimes C - B\otimes D = A\otimes(C-D) + (A-B)\otimes D \succeq 0.\qedhere
\]
\end{proof}

\begin{lem}[Montgomery \cite{large-sieve-note}] If $A \in \cA_{d,\kappa}$, then
\[
\Big(\prod_{p \in d} \frac{\kappa_p}{|p|-\kappa_p}\Big)\Tr(AJ) \le \Tr\Big(A\Big(\bigotimes_{p \in d}(|p|I_p-J_p)\bigotimes_{q\not\in d}J_q\Big)\Big).
\]
In other words, the matrix $B^d = \Big(\bigotimes_{p \in d}(|p|I_p-J_p)\bigotimes_{q\not\in d}J_q\Big) - \Big(\prod_{p \in d} \frac{\kappa_p}{|p|-\kappa_p}\Big)J$ is in $\cB_{d,\kappa}$.
\end{lem}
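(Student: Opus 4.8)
The plan is to establish the equivalent assertion that $B^d \in \cB_{d,\kappa}$; the displayed inequality then follows at once, since $A \in \cA_{d,\kappa}$ is a sum of outer products $xx^T$ with each $x$ vanishing on $\kappa_p$ parts of each $p \in d$, so $\Tr(AB^d) = \sum x^TB^dx \ge 0$ by restricting $B^d$ to the coordinates on which the relevant $x$ is allowed to be nonzero, and rearranging $\Tr(AB^d)\ge 0$ gives the inequality. By definition, $B^d \in \cB_{d,\kappa}$ means that for every choice of $\kappa_p$ parts of each partition $p \in d$, the principal submatrix of $B^d$ obtained by deleting all the corresponding rows and columns is positive semidefinite. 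In the multiplicative notation a part of $p$ is a coordinate slice inside the single tensor factor $\RR^{\ZZ/|p|}$, so it suffices to prove positive semidefiniteness of the corresponding submatrix of the ambient matrix on $\RR^{\prod_p \ZZ/|p|}$ (a principal submatrix of a positive semidefinite matrix is again positive semidefinite), and the deletions for distinct $p \in d$ take place in distinct tensor factors.

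First I would put $B^d$ in factored form. Since $J = \bigotimes_{p\in d} J_p \otimes \bigotimes_{q\notin d} J_q$, bilinearity of $\otimes$ gives
\[
B^d = \Big(\bigotimes_{p\in d}(|p| I_p - J_p) - \Big(\prod_{p\in d}\tfrac{\kappa_p}{|p|-\kappa_p}\Big)\bigotimes_{p\in d} J_p\Big) \otimes \bigotimes_{q\notin d} J_q .
\]
Writing $m_p := |p| - \kappa_p$ and deleting $\kappa_p$ coordinates from the $p$-th factor for each $p \in d$ turns $I_p$ into $I_{m_p}$ and $J_p$ into $J_{m_p}$, and a principal submatrix of a tensor product obtained by deleting within the individual factors is the tensor product of the per-factor principal submatrices. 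Hence, setting $A_p := |p| I_{m_p} - J_{m_p}$ and $B_p := \tfrac{\kappa_p}{m_p} J_{m_p}$ and noting $\bigotimes_{p\in d} B_p = \big(\prod_{p\in d}\tfrac{\kappa_p}{m_p}\big)\bigotimes_{p\in d} J_{m_p}$, the submatrix of $B^d$ that we must show is positive semidefinite equals
\[
\Big(\bigotimes_{p\in d} A_p - \bigotimes_{p\in d} B_p\Big) \otimes \bigotimes_{q\notin d} J_q .
\]

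Next I would treat one factor at a time. For $p \in d$ we have $B_p \succeq 0$, and $A_p - B_p = |p| I_{m_p} - \tfrac{|p|}{m_p} J_{m_p}$ sends the all-ones vector to zero and acts as $|p|$ times the identity on its orthogonal complement, so $A_p \succeq B_p \succeq 0$. Iterating the tensor-product proposition proved above over the pairs $(A_p,B_p)$ for $p \in d$ yields $\bigotimes_{p\in d} A_p - \bigotimes_{p\in d} B_p \succeq 0$; tensoring this positive semidefinite matrix with the positive semidefinite matrix $\bigotimes_{q\notin d} J_q$ gives exactly the positive semidefiniteness of the submatrix displayed above, and hence $B^d \in \cB_{d,\kappa}$.

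The computations involved are routine (one eigenvalue calculation and an induction on $|d|$ using the tensor-product proposition). I expect the only step requiring real care to be the bookkeeping in the first two paragraphs: one must keep straight that deleting a part of $p$ means deleting a coordinate slice inside the one factor $\RR^{\ZZ/|p|}$, so that the deletions for different $p \in d$ act on independent factors, the factors with $q \notin d$ are left untouched, and the resulting submatrix is genuinely the tensor product of the per-factor submatrices.
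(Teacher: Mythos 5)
Your proof is correct and follows essentially the same route as the paper's: factor out the $\bigotimes_{q\notin d}J_q$ tensor factor, observe that after deleting $\kappa_p$ coordinates the single-factor matrices satisfy $|p|I_{m_p}-J_{m_p}\succeq \tfrac{\kappa_p}{m_p}J_{m_p}\succeq 0$ (the paper phrases this as the positive semidefiniteness of $(|p|-\kappa_p)I_{m_p}-J_{m_p}$, ``by Cauchy--Schwarz''), and multiply these inequalities across $p\in d$ using the tensor-product monotonicity proposition stated just before the lemma. The paper's ``due to the multiplicative nature of the inequality, it's enough to check $d=\{p\}$'' is exactly your reduction, just stated more tersely; your added bookkeeping about coordinate slices and principal submatrices makes the reduction explicit but does not change the argument.
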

\begin{proof} Due to the multiplicative nature of the inequality, it's enough to check it in the case $d = \{p\}$ for just one partition $p$. So we just need to check that $|p|I_p - J_p - \frac{\kappa_p}{|p|-\kappa_p}J_p$ is in $\cB_{p,\kappa_p}$.

Multiplying through by $\frac{|p|-\kappa_p}{|p|}$, this is equivalent to $(|p|-\kappa_p)I_p - J_p \in \cB_{p,\kappa_p}$. If we delete any $\kappa_p$ rows and corresponding columns of this matrix, then due to the symmetry of $I_p,J_p$ the resulting matrix will be $(|p|-\kappa_p)I_{|p|-\kappa_p} - J_{|p|-\kappa_p}$, which is positive semidefinite by Cauchy-Schwarz.
\end{proof}

\begin{lem}[Montgomery \cite{large-sieve-note}] If $S(\alpha)$ is the symmetric matrix such that
\[
x^TS(\alpha)x = \Big|\sum_j e^{2\pi ij\alpha} x_j\Big|^2,
\]
that is, if $S(\alpha)_{jk} = \cos(2\pi\alpha(j-k))$, then
\[
\bigotimes_{p \in d}(|p|I_p-J_p)\bigotimes_{q\not\in d} J_q = \sum_{a \in (\ZZ/|d|)^\times} S\Big(\frac{a}{|d|}\Big).
\]
\end{lem}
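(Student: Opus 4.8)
The plan is to verify the identity entrywise. Regard both sides as operators on $\RR^{\prod_p\ZZ/|p|}$; since passing to $\RR^{Z\times Z}$ amounts to pre- and post-composing with one fixed linear map, it suffices to prove the identity before restriction. Index rows and columns by integers, an integer $j$ acting through its residues $j\bmod|p|$. The guiding observation is that every matrix in sight is a sum of ``character matrices'': its $(j,k)$ entry is a sum of terms $e^{2\pi ia(j-k)/n}$, and hence depends only on $j-k$ modulo $n$.

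First I would Fourier-expand each tensor factor over the relevant cyclic group. For a prime $p$, summing a geometric series gives $\sum_{a\in\ZZ/|p|}e^{2\pi iam/|p|}=|p|$ if $|p|\mid m$ and $0$ otherwise; deleting the $a=0$ term, $\sum_{a\in(\ZZ/|p|)^\times}e^{2\pi iam/|p|}=|p|\,[\,|p|\mid m\,]-1$, which is exactly the $(j,k)$ entry of $|p|I_p-J_p$ with $m=j-k$. Similarly the $(j,k)$ entry of $J_q$ is $1=e^{2\pi i\cdot0\cdot(j-k)/|q|}$. Multiplying these out, the $(j,k)$ entry of $\bigotimes_{p\in d}(|p|I_p-J_p)\bigotimes_{q\notin d}J_q$ equals
\[
\prod_{p\in d}\Big(\sum_{a_p\in(\ZZ/|p|)^\times}e^{2\pi ia_p(j-k)/|p|}\Big)=\sum_{(a_p)_{p\in d}}\prod_{p\in d}e^{2\pi ia_p(j-k)/|p|},
\]
the outer sum ranging over tuples with each $a_p\in(\ZZ/|p|)^\times$. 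As it must, this does not see $j,k$ modulo the primes outside $d$.

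Next I would collapse the product over $p\in d$ using the Chinese Remainder Theorem $\ZZ/|d|\cong\prod_{p\in d}\ZZ/|p|$, valid since the $|p|$ for $p\in d$ are distinct primes. If $e_p\in\ZZ/|d|$ is the CRT idempotent at $p$ (so $e_p\equiv1\pmod{|p|}$ and $e_p\equiv0\pmod{|p'|}$ for $p'\neq p$), then $e_p=(|d|/|p|)\,c_p$ with $c_p:=(|d|/|p|)^{-1}\bmod|p|$ a unit, so $e_p/|d|=c_p/|p|$; from $a\equiv\sum_{p\in d}ae_p\pmod{|d|}$ we get $e^{2\pi ia(j-k)/|d|}=\prod_{p\in d}e^{2\pi i(ac_p)(j-k)/|p|}$. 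Since each $c_p$ is a unit modulo $|p|$, the map $a\mapsto(ac_p\bmod|p|)_{p\in d}$ is a bijection $(\ZZ/|d|)^\times\to\prod_{p\in d}(\ZZ/|p|)^\times$, so the substitution $a_p=ac_p$ identifies the double sum of the previous display with $\sum_{a\in(\ZZ/|d|)^\times}e^{2\pi ia(j-k)/|d|}$. Finally $(\ZZ/|d|)^\times$ is stable under $a\mapsto-a$, so this sum equals its own conjugate, namely $\sum_{a\in(\ZZ/|d|)^\times}\cos(2\pi a(j-k)/|d|)$, which is the $(j,k)$ entry of $\sum_{a\in(\ZZ/|d|)^\times}S(a/|d|)$. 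This proves the identity.

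The main obstacle is entirely a matter of CRT bookkeeping: the naive guess $e^{2\pi ia(j-k)/|d|}=\prod_{p\in d}e^{2\pi ia(j-k)/|p|}$ is false, and one must pass through the idempotents $c_p$; what saves the argument is that multiplying by the unit $c_p$ leaves the bijection between $(\ZZ/|d|)^\times$ and $\prod_{p\in d}(\ZZ/|p|)^\times$ intact, so the index sets on the two sides match up. One can package all of this as the statement that both sides have $(j,k)$ entry equal to the Ramanujan sum $c_{|d|}(j-k)$: the left side by multiplicativity of $q\mapsto c_q(n)$ in $q$ (with the factors $J_q$ for $q\notin d$ contributing the trivial factor $c_1(n)=1$), and the right side because Ramanujan sums are real.
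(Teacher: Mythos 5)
The paper states this lemma without proof, citing Montgomery's note, so there is no in-paper argument to compare against. Your proposal is correct and is the standard one: both sides have $(j,k)$ entry equal to the Ramanujan sum $c_{|d|}(j-k)=\sum_{a\in(\ZZ/|d|)^\times}e^{2\pi i a(j-k)/|d|}$, the left side by multiplicativity in $q$ of $c_q(n)$ (with $c_{|p|}(n)=|p|[\,|p|\mid n\,]-1$ matching the entries of $|p|I_p-J_p$, and $c_1(n)=1$ for the $J_q$ factors), the right side because $(\ZZ/|d|)^\times$ is closed under negation so the exponential sum equals its real part $\sum_a\cos(2\pi a(j-k)/|d|)$. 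Your CRT bookkeeping via the idempotents $e_p=(|d|/|p|)c_p$ and the unit substitution $a\mapsto ac_p\bmod|p|$ is exactly the partial-fraction decomposition $a/|d|\equiv\sum_{p\in d}ac_p/|p|\pmod 1$ needed to reindex the product of prime-level sums as a single sum over $a\in(\ZZ/|d|)^\times$, and you correctly note that the naive factorization $e^{2\pi ia(j-k)/|d|}=\prod_p e^{2\pi ia(j-k)/|p|}$ would be wrong without it. No gaps.
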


\begin{thm}[Analytic Large Sieve Inequality \cite{montgomery-large}] If $S(\alpha) \in \RR^{Z\times Z}$ is defined as in the previous proposition, the $\alpha$s are $\delta$-spaced, and $Z$ is an interval with $|Z| = N$, then
\[
\Big\|\sum_\alpha S(\alpha)\Big\|_{op} \le N + \delta^{-1} - 1.
\]
\end{thm}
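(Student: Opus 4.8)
The plan is to prove the dual operator-norm bound by testing $\sum_\alpha S(\alpha)$ against an arbitrary vector $x \in \RR^Z$ and showing $x^T\big(\sum_\alpha S(\alpha)\big)x \le (N+\delta^{-1}-1)\,x^Tx$. Unwinding the definition of $S(\alpha)$, the left-hand side equals $\sum_\alpha \big|\sum_{j\in Z} e^{2\pi i j\alpha} x_j\big|^2$, so we are reduced to the classical large sieve inequality: for $\delta$-spaced reals $\alpha_1,\dots,\alpha_R \in \RR/\ZZ$ and an interval $Z$ of length $N$,
\[
\sum_{r=1}^R \Big|\sum_{j\in Z} x_j e^{2\pi i j\alpha_r}\Big|^2 \le (N+\delta^{-1}-1)\sum_{j\in Z}|x_j|^2.
\]
This is exactly the form of the analytic large sieve with the optimal constant $N+\delta^{-1}-1$ due to Montgomery--Vaughan and Selberg, so one option is simply to cite it; but I will sketch a self-contained duality-style argument.

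The first step is a reduction via duality: writing $S(\alpha_r) = v(\alpha_r)v(\alpha_r)^*$ with $v(\alpha_r)_j = e^{2\pi i j\alpha_r}$, the bound $\big\|\sum_r v(\alpha_r)v(\alpha_r)^*\big\|_{op} \le \Delta$ is equivalent (by considering the Gram matrix / taking adjoints) to the ``dual'' inequality
\[
\sum_{r,s} c_r \overline{c_s}\, \langle v(\alpha_r), v(\alpha_s)\rangle \le \Delta \sum_r |c_r|^2
\]
for all complex $c_r$, where $\langle v(\alpha_r),v(\alpha_s)\rangle = \sum_{j\in Z} e^{2\pi i j(\alpha_r-\alpha_s)}$ is a Dirichlet-kernel-type sum. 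So the second step is to estimate this bilinear form. The standard route is Selberg's majorant trick: one constructs an entire function $F$ of exponential type $2\pi\delta^{-1}$ (or the appropriate normalization) with $F(x) \ge \mathbf 1_{[0,N-1]}(x)$ for all real $x$ and $\widehat F$ supported in a short interval, with $\int F = N - 1 + \delta^{-1}$. Replacing $\mathbf 1_Z$ by $F$ only increases $\sum_r\big|\sum_j x_j e^{2\pi i j\alpha_r}\big|^2$ term by term, and then the Fourier support condition on $\widehat F$ combined with the $\delta$-spacing of the $\alpha_r$ makes the resulting double sum collapse: the cross terms $\widehat F(\alpha_r - \alpha_s)$ vanish for $r \ne s$, leaving only the diagonal, which contributes $\widehat F(0)\sum_j|x_j|^2 = (N-1+\delta^{-1})\sum_j |x_j|^2$.

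The main obstacle is the construction of the Beurling--Selberg majorant $F$ with exactly the right type and mass $N-1+\delta^{-1}$ — getting the sharp constant (rather than a constant like $N + \delta^{-1}$ or $2\delta^{-1}$) is precisely where the classical proofs do real work, and I would either invoke the Beurling--Selberg extremal function explicitly or, following Montgomery--Vaughan, use the Gallagher/large-sieve duality argument with an optimized kernel. Since the excerpt labels this the ``Analytic Large Sieve Inequality'' with a citation to \cite{montgomery-large}, the cleanest path for the paper is: (i) reduce the operator-norm statement to the exponential-sum statement via the $S(\alpha)=v(\alpha)v(\alpha)^*$ identity and self-adjointness, and (ii) quote the sharp analytic large sieve from the literature for the remaining inequality, noting that the constant $N+\delta^{-1}-1$ is the known optimal one. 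I expect step (i) to be routine linear algebra and step (ii) to be the substantive input that is best cited rather than reproved.
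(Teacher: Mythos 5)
The paper does not prove this theorem at all; it states it as an imported black box with a citation to Montgomery, which is exactly what you conclude at the end is the ``cleanest path.'' So your final recommendation matches the paper. The extra material you supply — the linear-algebra reduction from the operator-norm statement to the classical exponential-sum form via $S(\alpha)=v(\alpha)v(\alpha)^*$ and the Gram-matrix duality, followed by a sketch of the Beurling--Selberg majorant argument — is a correct outline of the standard Selberg/Montgomery--Vaughan proof, and the paper's own remark afterwards (that the sharp large sieve is treated as ``a triviality'' verifiable numerically) confirms that nothing more is intended here.

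One small slip in the sketch: once you dualize, the majorant should be inserted into the dual quantity $\sum_{j\in Z}\bigl|\sum_r c_r e^{2\pi i j\alpha_r}\bigr|^2 \le \sum_{j\in\ZZ} F(j)\bigl|\sum_r c_r e^{2\pi i j\alpha_r}\bigr|^2$, not into the primal expression $\sum_r\bigl|\sum_j x_j e^{2\pi i j\alpha_r}\bigr|^2$ as you wrote — the pointwise nonnegativity that lets you replace $\mathbf 1_Z$ by $F$ lives on the $j$-sum of the dual form. After that, Poisson summation turns $\sum_j F(j)e^{2\pi ij(\alpha_r-\alpha_s)}$ into a sum of values of $\widehat F$, and the support and $\delta$-spacing hypotheses kill the off-diagonal terms as you say. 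This is a notational mix-up rather than a gap; the argument goes through once the roles of $x$ and $c$ are kept straight.
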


\begin{cor} The matrix $B^1 = (N+\delta^{-1}-1)I - \sum_{d\in D} \bigotimes_{p \in d}(|p|I_p-J_p)\bigotimes_{q\not\in d} J_q$ is positive semidefinite if the fractions with denominators $|d|$ for $d \in D$ are $\delta$-spaced. If the maximum $|d|$ for $d \in D$ is $Q$, then we can take $\delta^{-1} = Q(Q-1)$.
\end{cor}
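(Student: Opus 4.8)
The plan is to reduce the statement directly to the Analytic Large Sieve Inequality via the two preceding lemmas. Using the Montgomery lemma that expresses each block $\bigotimes_{p\in d}(|p|I_p - J_p)\bigotimes_{q\notin d}J_q$ as $\sum_{a\in(\ZZ/|d|)^\times} S(a/|d|)$, I would first rewrite
\[
\sum_{d\in D}\;\bigotimes_{p\in d}(|p|I_p-J_p)\bigotimes_{q\notin d}J_q \;=\; \sum_{\alpha\in F} S(\alpha),
\]
where $F = \{\,a/|d| : d\in D,\ a\in(\ZZ/|d|)^\times\,\}$. The hypothesis that these fractions are $\delta$-spaced forces them to be pairwise distinct, and in particular forces the values $|d|$, $d\in D$, to be pairwise distinct, so the double sum really is an honest sum over the set $F$, each tensor-product block contributing its own collection of Farey fractions. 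Next, each $S(\alpha)$ is positive semidefinite since $x^TS(\alpha)x = \bigl|\sum_j e^{2\pi i j\alpha}x_j\bigr|^2 \ge 0$; hence $\sum_{\alpha\in F}S(\alpha)\succeq 0$ and its operator norm equals its largest eigenvalue. Applying the Analytic Large Sieve Inequality to the $\delta$-spaced set $F$ (with $|Z| = N$) gives $\bigl\|\sum_{\alpha\in F}S(\alpha)\bigr\|_{op} \le N+\delta^{-1}-1$, which is precisely the statement that $B^1 = (N+\delta^{-1}-1)I - \sum_{\alpha\in F}S(\alpha)\succeq 0$.

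For the final sentence, I would verify the elementary claim that if every $|d|$ with $d\in D$ is at most $Q$ (and the $|d|$ are distinct, as just noted), then $F$ is automatically $\tfrac{1}{Q(Q-1)}$-spaced. Take two distinct reduced fractions $a/m, a'/m'\in F$ with $1\le m,m'\le Q$. If $m=m'$, then $a\ne a'$ forces $|a/m - a'/m'|\ge 1/m\ge 1/Q$. If $m\ne m'$, then $am'-a'm\ne 0$ (the fractions being distinct and in lowest terms), so $|a/m - a'/m'| = |am'-a'm|/(mm')\ge 1/(mm')\ge 1/(Q(Q-1))$, since the larger denominator is $\le Q$ and the smaller is $\le Q-1$. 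Thus the minimum gap in $F$ is at least $1/(Q(Q-1))$ for $Q\ge 2$, so we may take $\delta^{-1}=Q(Q-1)$ and feed this back into the previous paragraph.

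I do not anticipate a genuine obstacle: the mathematical content is entirely carried by the two cited Montgomery results and the Analytic Large Sieve Inequality, and the only things requiring care are bookkeeping — namely checking that the tensor-product blocks for distinct $d\in D$ contribute disjoint sets of Farey fractions (which is exactly what the $\delta$-spacing hypothesis guarantees) and recording the trivial Farey-spacing estimate above. It is also worth a one-line remark that $S(\alpha)$ is interpreted throughout as its compression to $\RR^{Z\times Z}$, consistently with the multiplicative matrix conventions of this section, so that the $N$ appearing in the Large Sieve Inequality is indeed $|Z|$.
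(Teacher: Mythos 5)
Your proof is correct and follows the only natural route: the paper itself states the corollary without a proof body, leaving it as the immediate combination of Montgomery's lemma (rewriting each tensor-product block as $\sum_{a \in (\ZZ/|d|)^\times}S(a/|d|)$) with the Analytic Large Sieve Inequality, plus the trivial Farey-spacing estimate $1/(Q(Q-1))$, which is exactly what you supply. One small simplification is available: in the setting where the Montgomery lemma applies (partitions by congruence classes modulo distinct primes), the values $|d| = \prod_{p\in d}|p|$ are automatically distinct for distinct $d\subseteq \cP$, and reduced fractions with distinct denominators are automatically distinct, so the disjointness of the contributed Farey fractions does not actually need to be extracted from the $\delta$-spacing hypothesis as you suggest — it is structural. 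Also, the phrase ``the larger denominator is $\le Q$ and the smaller is $\le Q-1$'' deserves a word of justification: since $m\neq m'$ are integers, WLOG $m<m'$ gives $m\le m'-1\le Q-1$, which is what you use. Neither of these affects correctness.
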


Putting it all together, and scaling everything down by a factor of $\sum_{d\in D} \frac{\kappa(d)}{\phi_\kappa(d)}$, where $\phi_\kappa$ is the multiplicative function with $\phi_\kappa(p) = |p|-\kappa_p$, we get the following proof of the (sieve-theoretic) Large Sieve inequality:
\begin{align*}
\frac{1}{\sum_d \frac{\kappa(d)}{\phi_\kappa(d)}}\Bigg(\Big((N&+\delta^{-1}-1)I - \sum_{d\in D} \bigotimes_{p \in d}(|p|I_p-J_p)\bigotimes_{q\not\in d} J_q\Big)\\
&+ \sum_{d > 1}\Big(\Big(\bigotimes_{p \in d}(|p|I_p-J_p)\bigotimes_{q\not\in d}J_q\Big) - \frac{\kappa(d)}{\phi_\kappa(d)}J\Big)\Bigg) + J = \frac{N+\delta^{-1}-1}{\sum_d \frac{\kappa(d)}{\phi_\kappa(d)}}I.
\end{align*}

Note that from our computational point of view, the analytic large sieve inequality (which is the hardest part of the proof) is considered a triviality, since in any particular case we may easily use a computer to numerically verify that the matrix
\[
B^1 = (N+\delta^{-1}-1)I - \sum_\alpha S(\alpha) = (N+\delta^{-1}-1)I - \sum_{d\in D} \bigotimes_{p \in d}(|p|I_p-J_p)\bigotimes_{q\not\in d} J_q
\]
is positive semidefinite. In fact, the final set of matrices $B^d$ used makes no explicit mention of the matrices $S(\alpha)$ whatsoever - they were only introduced in order to facilitate the proof that $B^1$ is positive semidefinite.

\section{The Larger Sieve}\label{s-larger}

To treat Gallagher's Larger Sieve \cite{gallagher-larger} in this framework, it is necessary to introduce an additional matrix $B^0$ where the only constraint on the entries of $B^0$ is that they are nonnegative. This corresponds to the fact that for the matrices $A = xx^T$ of interest to us, the vector $x$ and the matrix $A$ will actually have positive entries (in fact, we are really only concerned with the case where the vector $x$ has all of its entries in $\{0,1\}$, but there doesn't seem to be any good way to make use of that fact). Here is Gallagher's sieve:
\[
B^0 + \sum_p \frac{\Big(\log(p)I_p - \frac{\log(p)}{p-\kappa_p}J_p\Big)\bigotimes_{q\ne p}J_q}{\sum_q \frac{\log(q)}{q-\kappa_q}-\log(N)} + J = \frac{\sum_p \log(p) - \log(N)}{\sum_p \frac{\log(p)}{p-\kappa_p}-\log(N)}I.
\]
The diagonal entries of $B^0$ are $0$, while the off-diagonal entries are given by
\[
B^0_{ij} = \frac{\log(N) - \sum_{p \mid i-j} \log(p)}{\sum_p \frac{\log(p)}{p-\kappa_p}-\log(N)}\text{ for }i\ne j.
\]
Note that the matrix $B^0$ is quite far from being positive semidefinite - every $2\times 2$ submatrix with corresponding rows and columns has the form $\begin{bmatrix} 0 & x\\ x & 0\end{bmatrix}$ for some $x \ge 0$.

In the case where $\kappa_p \approx \alpha p$ for some constant $0 \le \alpha < 1$, if we apply the Larger Sieve with the first $eN^{1-\alpha}$ primes, we get an upper bound of approximately $e(1-\alpha)N^{1-\alpha}\log(N^{1-\alpha})$ on the size of the sifted set $X$. In the special case $\kappa_p = 1$ of interest to us, we get the rather weak upper bound $eN\log(N)$ on the size of the subset $X$ of our interval $Z$ (which has size only $N$).

\section{A simpler semidefinite framework}\label{s-simpler}

Note that in both the Large Sieve and the Larger Sieve, the matrices $B^d$ were highly symmetric - in the sense that they could be represented as linear combinations of tensor products of $I$ and $J$ - for all $d$ other than $0,1$. Thus it seems likely that restricting our choices of $B^d$ to highly symmetric matrices for $d \ne 0,1$ does not lose too much power.

In this section we will show that testing whether $B^d \in \cB_d$ for such highly symmetric matrices essentially reduces to verifying a finite set of linear inequalities. In fact, we show that in most cases they come from the same types of simple applications of Cauchy-Schwarz that are used in the Large sieve and the Larger Sieve. This allows us to define a simpler semidefinite relaxation which still retains some of the power of the full semidefinite relaxation introduced in this paper, and which is significantly more practical for numerical computations.

\begin{defn} If $i,j \in Z$, then the expression $(j-i,\cP)$ is defined to be the set of partitions $p$ in $\cP$ such that $i$ and $j$ lie in the same part of $p$.
\end{defn}

\begin{defn} For $d \subseteq \cP$ we define $\phi(d)$ to be $\prod_{p \in d} (|p|-1)$, and we set $\mu(d) = (-1)^{\#\{p \in d\}}$. For $s \subseteq \cP$, we define $\phi^s_\kappa(d)$ by $\phi^s_\kappa(d) = \prod_{p \in d\cap s} (|p|-\kappa_p-1) \prod_{p \in d\setminus s} (|p|-1)$.
\end{defn}

\begin{thm}\label{sym-semidefinite} If a matrix $B$ is given by
\[
B_{ij} = b_{(j-i,\cP)}
\]
for a given system of real numbers $b_d$, then $B \succeq 0$ if the numbers $b_d$ satisfy the system of inequalities
\[
\forall k \subseteq \cP,\ \sum_{d} \phi(\cP\!\setminus\! (d\cup k))\mu(k\!\setminus\! d)\ b_d \ge 0.
\]
The above system of inequalities is satisfied iff there exist weights $w_k \ge 0$ such that
\[
b_d = \sum_k \phi(d\cap k)\mu(k\setminus d)w_k,
\]
or equivalently such that
\[
B = \sum_k w_k \bigotimes_{p \in k} \Big(|p|I_p - J_p\Big)\bigotimes_{q\not\in k}J_q.
\]
If the natural map $Z \rightarrow \prod_{p \in \cP} p$ is surjective, then $B \succeq 0$ if and only if the above conditions are satisfied.
\end{thm}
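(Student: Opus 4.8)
The plan is to decompose the matrix $B$ according to the common-refinement structure of the partitions in $\cP$, exploiting the symmetry hypothesis $B_{ij} = b_{(j-i,\cP)}$. The key observation is that the matrices $\bigotimes_{p\in k}(|p|I_p - J_p)\bigotimes_{q\notin k}J_q$, as $k$ ranges over subsets of $\cP$, form a convenient basis: their $(i,j)$ entry depends only on $(j-i,\cP)$, equals $0$ unless $k\subseteq (j-i,\cP)^c$... more precisely, the $(i,j)$ entry of the $k$-term is $\prod_{p\in k,\ p\notin(j-i,\cP)}|p| \cdot \prod_{p\in k,\ p\in(j-i,\cP)}0$ times the obvious thing, so after simplification it is $\phi(d\cap k)\mu(k\setminus d)$-ish; in any case one checks directly that the change of basis between the $b_d$ and the weights $w_k$ is exactly the pair of mutually inverse transforms written in the statement (a Möbius-type inversion over the lattice of subsets of $\cP$, twisted by the multiplicative functions $\phi$ and $\mu$). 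So the first step is a purely formal verification that
\[
b_d = \sum_k \phi(d\cap k)\mu(k\setminus d)w_k \quad\Longleftrightarrow\quad w_k = c_k\sum_d \phi(\cP\setminus(d\cup k))\mu(k\setminus d)\,b_d
\]
for suitable positive normalizing constants $c_k$, so that the displayed linear-inequality system is literally the assertion $w_k\ge 0$.

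Granting that, one direction of the theorem is immediate: if all $w_k\ge 0$, then $B = \sum_k w_k\bigotimes_{p\in k}(|p|I_p - J_p)\bigotimes_{q\notin k}J_q$ is a nonnegative combination of matrices each of which is positive semidefinite — here I invoke the earlier tensor-product proposition together with the fact that $|p|I_p - J_p\succeq 0$ (Cauchy--Schwarz, as in the Large Sieve lemma) and $J_q\succeq 0$ — hence $B\succeq 0$. This also shows the second-to-last sentence. The equivalence of the two descriptions of the inequality system is the content of the first step.

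The harder direction is the converse under the surjectivity hypothesis: if $B\succeq 0$ then the $w_k$ are automatically nonnegative. Here I would use surjectivity to identify $Z$ with the full product $\prod_{p\in\cP}p$, so that $B$ becomes a genuine element of the group algebra $\RR[\prod_p \ZZ/|p|]$ acting by convolution (a "circulant" in the multi-dimensional sense). Its eigenvalues are indexed by characters $\chi$ of the group, and one computes that the eigenvalue attached to a character supported exactly on the coordinates in a set $k\subseteq\cP$ is a positive multiple of $w_k$ — indeed the matrices $\bigotimes_{p\in k}(|p|I_p - J_p)\bigotimes_{q\notin k}J_q$ are, up to scalars, precisely the spectral projections onto the sum of character-eigenspaces with support $k$. (The projection onto the trivial-on-$p$ part is $\frac1{|p|}J_p$ and onto its complement is $I_p - \frac1{|p|}J_p$; tensoring gives the claim, and $|p|I_p - J_p = |p|(I_p - \frac1{|p|}J_p)$.) Positive semidefiniteness of $B$ forces every eigenvalue to be $\ge 0$, hence every $w_k\ge 0$.

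The main obstacle is the bookkeeping in the first step — getting the $\phi$, $\mu$, and the normalizing constants $c_k$ exactly right so that the two inversion formulas are genuinely inverse to each other and match the statement verbatim. The cleanest route is to verify the single-partition case $\cP = \{p\}$ by hand (where the basis is just $\{J_p,\ |p|I_p - J_p\}$ and the transforms are $2\times 2$), and then tensor: because every quantity in sight ($\phi$, $\mu$, the matrices, the transforms) is multiplicative over disjoint unions of partitions, the general identity follows from the one-partition identity by taking tensor products. This reduces the only real computation to a triviality and makes the surjective/spectral argument in the last step transparent, since the spectral projections also factor as tensor products.
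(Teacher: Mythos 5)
Your proposal is correct and takes essentially the same approach as the paper: both diagonalize $B$ in the character basis of the product group $\prod_p \ZZ/|p|$ (surjectivity letting one assume $Z$ is the full product), both observe the eigenvalue of $B$ on a character depends only on the support $k$ of that character, both identify $\bigotimes_{p\in k}(|p|I_p-J_p)\bigotimes_{q\notin k}J_q$ as a positive basis (equivalently, rescaled spectral projections), and both reduce the bookkeeping to the single-partition case via multiplicativity before tensoring. The only cosmetic difference is organizational: the paper first computes the diagonal entries of $U^H B U$ directly to obtain the inequality system and then verifies the positive basis by an orthogonality identity, whereas you go the other way, expanding $B$ in the spectral-projection basis with coefficients $w_k$ and then observing that the displayed inequalities are the inverse transform; both routes require the same inversion computation, which you correctly defer to the one-partition case.
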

\begin{proof} We just need to prove this in the case $Z = \prod_{p \in \cP} p$. We identify each partition $p$ with a cyclic group $\ZZ/|p|$, and $Z$ with the product group $\prod_{p\in\cP}\ZZ/|p|$. We define the matrix $U \in \RR^{Z\times \hat{Z}}$ to have as columns the set of characters $\chi: Z \rightarrow \CC^\times$, so that $U_{i,\chi} = \chi(i)$. The matrix $B$ is positive semidefinite if and only if the matrix $U^HBU \in \RR^{\hat{Z}\times\hat{Z}}$ is positive semidefinite.

The $\psi,\chi$ entry of $U^HBU$ is given by
\begin{align*}
(U^HBU)_{\psi,\chi} &= \sum_{i,j} B_{ij}\overline{\psi(i)}\chi(j)\\
&= \sum_{i,j} b_{(j-i,\cP)}\overline{\psi(i)}\chi(j)\\
&= \sum_{x\in Z} b_{(x,\cP)} \sum_{i} \overline{\psi(i)}\chi(i+x)\\
&= \sum_{x \in Z} b_{(x,\cP)} \chi(x) |Z|\langle \psi, \chi\rangle.
\end{align*}
In particular, the matrix $U^HBU$ is diagonal, so in order to check that it is positive semidefinite we just need to check that the diagonal entries are nonnegative.

Every character $\chi$ on $Z$ can be written as a product of characters $\chi_p$ on $\ZZ/|p|$. Letting $k_\chi$ be the set of $p$ such that $\chi_p$ is nontrivial, we see that the $\chi,\chi$ entry of $U^HBU$ is proportional to
\begin{align*}
\sum_{x\in Z} b_{(x,\cP)} \chi(x) &= \sum_{d \subseteq \cP} b_d \sum_{(x,\cP) = d} \chi(x)\\
&= \sum_d b_d \prod_{p \in \cP\setminus d} \begin{cases} |p|-1 & p \not\in k_\chi\\ -1 & p \in k_\chi\end{cases}\\
&= \sum_d b_d\ \phi(\cP\!\setminus\!(d\cup k_\chi))\mu((\cP\!\setminus\! d)\cap k_\chi).
\end{align*}

Noting that the system of inequalities which we require the $b_d$s to satisfy has the same number of inequalities as variables, it is natural to look for a positive basis for the cone of solutions. By the multiplicative nature of these inequalities, it is enough to understand the case of just one partition $p$, in which case we see that a positive basis for the cone of solutions is given by $|p|I_p-J_p$ and $J_p$. Concretely, we can verify that the matrices
\[
\bigotimes_{p \in k} \Big(|p|I_p - J_p\Big)\bigotimes_{q\not\in k}J_q
\]
form a positive basis for the cone of solutions by verifying that for any $k,k' \subseteq \cP$, we have
\[
\sum_{d} \phi(\cP\!\setminus\! (d\cup k'))\mu(k'\!\setminus\! d)\cdot \phi(d\cap k)\mu(k\setminus d) = \begin{cases} 0 & k \ne k',\\ \prod_{p \in \cP}(|p|-2) & k = k'.\end{cases}\qedhere
\]
\end{proof}

Applying the above result to the case where some parts of some partitions have been knocked out, we have the following result.

\begin{cor} If $B^s$ is given by $B^s_{ij} = b^s_{(j-i,\cP)}$, then to check that $B^s \in \cB_{s,\kappa}$ it is sufficient to check that the system of linear inequalities
\[
\forall k \subseteq \cP,\ \sum_{d} \phi^s_\kappa(\cP\!\setminus\! (d\cup k))\mu(k\!\setminus\! d)\ b^s_d \ge 0
\]
is satisfied. This occurs iff there are weights $w^s_k \ge 0$ such that
\[
B^s = \sum_k w^s_k \bigotimes_{p \in k\cap s} \Big((|p|-\kappa_p)I_p - J_p\Big)\bigotimes_{q \in k\setminus s} \Big(|q|I_q - J_q\Big)\bigotimes_{r\not\in k}J_r,
\]
and in this case the $b^s_d$s are given by
\[
b^s_d = \sum_k \phi^s_\kappa(d\cap k)\mu(k\setminus d) w_k.
\]
\end{cor}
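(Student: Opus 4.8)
The plan is to deduce the corollary from Theorem~\ref{sym-semidefinite} by the same ``knock-out'' trick that was used to pass from the Large Sieve inequality to its one-partition version: membership of $B^s$ in $\cB_{s,\kappa}$ is, by definition, the assertion that every submatrix $B^s|_{Z\setminus(c_1\cup\cdots\cup c_k)}$ obtained by deleting one chosen part $c_i$ of each partition $p_i\in s$ is positive semidefinite. So first I would fix such a choice of parts $c_i$ (one for each $p\in s$), restrict to the resulting index set $Z' = Z\setminus\bigcup_i c_i$, and observe that $Z'$ is (up to the surjectivity hypothesis on $Z\to\prod_p p$, which we may assume as in the theorem by working with $Z = \prod_p p$ and then projecting) again a product $\prod_{p\in\cP} p'$ in which each partition $p\in s$ has been shrunk from $|p|$ parts to $|p|-\kappa_p$ parts, and each partition $q\notin s$ is untouched. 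Crucially, the restricted matrix still has the translation-invariant form $(B^s|_{Z'})_{ij} = b^s_{(j-i,\cP)}$, where now $(j-i,\cP)$ is computed inside the smaller group: the entries $b^s_d$ do not change, only the ambient partition sizes do.

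Next I would apply Theorem~\ref{sym-semidefinite} verbatim to this restricted matrix. The theorem says $B^s|_{Z'}\succeq 0$ iff $\sum_d \phi'(\cP\setminus(d\cup k))\mu(k\setminus d)\,b^s_d\ge 0$ for all $k$, where $\phi'$ is the Euler-type function attached to the \emph{new} partition sizes, i.e. $\phi'(p) = (|p|-\kappa_p)-1$ for $p\in s$ and $\phi'(q) = |q|-1$ for $q\notin s$. Inspecting the definition of $\phi^s_\kappa$ given just before the theorem, this $\phi'$ is exactly $\phi^s_\kappa$. So the inequality system is precisely the one claimed in the corollary. The key point making the reduction work is that this system \emph{does not depend on which parts $c_i$ were deleted} --- it depends only on the cardinalities $|p|-\kappa_p$ --- so checking it once suffices to guarantee $B^s|_{Z\setminus(c_1\cup\cdots\cup c_k)}\succeq 0$ simultaneously for all $\binom{|p_1|}{1}\cdots$ choices, which is the definition of $B^s\in\cB_{s,\kappa}$. (If $Z\to\prod_p p$ is not surjective, the restricted matrix is a further submatrix of the surjective case, and a submatrix of a positive semidefinite matrix is positive semidefinite, so we still get a sufficient condition --- which is all the corollary claims.)

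For the ``iff there are weights $w^s_k\ge 0$'' part and the formula $b^s_d = \sum_k \phi^s_\kappa(d\cap k)\mu(k\setminus d)w_k$, I would again just transport the corresponding statement of Theorem~\ref{sym-semidefinite} through the same substitution of partition sizes: the positive basis $\bigotimes_{p\in k}(|p|I_p - J_p)\bigotimes_{q\notin k}J_q$ for the solution cone in the theorem becomes, after shrinking the partitions in $s$ from size $|p|$ to size $|p|-\kappa_p$, the matrices $\bigotimes_{p\in k\cap s}((|p|-\kappa_p)I_p - J_p)\bigotimes_{q\in k\setminus s}(|q|I_q - J_q)\bigotimes_{r\notin k}J_r$, and the pairing identity $\sum_d \phi'(\cP\setminus(d\cup k'))\mu(k'\setminus d)\phi'(d\cap k)\mu(k\setminus d) = [k=k']\prod_p(\text{new }|p|-2)$ holds with $\phi'$ replaced by $\phi^s_\kappa$ by exactly the same multiplicative computation. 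One should note that the orthogonality/positive-basis argument requires the new sizes to be at least $2$ (roughly, $|p|-\kappa_p\ge 2$ for $p\in s$), which is the natural nondegeneracy hypothesis; in borderline small cases one argues directly.

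The only genuine obstacle is bookkeeping: one must be careful that the map $(j-i,\cP)$ is interpreted consistently when passing to the subgroup $Z'$, and that deleting a part $c_i$ of $p_i$ really does leave a coset-like structure on which the matrix stays translation-invariant. In the regime of interest, where each $p\in\cP$ is a cyclic group $\ZZ/|p|$, deleting $\kappa_p$ residues does \emph{not} leave a subgroup, so strictly speaking the clean character-diagonalization of Theorem~\ref{sym-semidefinite} does not apply to the literal restricted matrix; instead the honest argument is the submatrix one --- build the full positive-semidefinite matrix $\sum_k w^s_k\bigotimes(\cdots)$ on all of $\prod_p p$ using the stated $w^s_k\ge 0$, observe its entries are $b^s_d$ on the diagonal blocks of interest, and conclude that every relevant submatrix is positive semidefinite because it sits inside a positive semidefinite matrix. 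This is why the corollary is phrased as ``it is sufficient to check'' rather than ``if and only if'': the forward direction is the substantive content and follows cleanly, while the converse would need the surjectivity-type hypothesis and is not asserted.
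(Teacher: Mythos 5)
Your primary argument---restrict $B^s$ to $Z'$ after deleting the chosen parts, observe that the restricted matrix retains the form $b^s_{(j-i,\cP)}$ with the shrunken partition sizes, and apply Theorem~\ref{sym-semidefinite} with $\phi$ replaced by $\phi^s_\kappa$---is exactly the paper's intended proof (the corollary is introduced with the one-line remark ``applying the above result to the case where some parts of some partitions have been knocked out''), and it is correct as written. The doubt you raise in your final paragraph about the ``clean character-diagonalization'' not applying because $S_p\subseteq\ZZ/|p|$ is not a subgroup is unfounded: the paper's $(j-i,\cP)$ is a purely combinatorial notion, defined as the set of $p\in\cP$ for which $i$ and $j$ lie in the same part of $p$, i.e.\ the set of coordinates where $i$ and $j$ agree. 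This set is preserved under \emph{any} bijection $S_p\leftrightarrow\ZZ/(|p|-\kappa_p)$, since $\sigma_p(i_p)=\sigma_p(j_p)\iff i_p=j_p$. After such a relabeling, $B^s|_{Z'}$ literally is a translation-invariant matrix on the shrunken product group (or a principal submatrix of one when $Z\neq\prod_p p$), and Theorem~\ref{sym-semidefinite} applies verbatim; no fallback is needed. Your fallback as phrased also has a small bug: the full matrix $\sum_k w^s_k\bigotimes(\cdots)$ on $\prod_p p$ is \emph{not} positive semidefinite---each factor $(|p|-\kappa_p)I_p-J_p$ has the negative eigenvalue $-\kappa_p$---it only lies in $\cB_{s,\kappa}$, and the correct statement is that after deleting $\kappa_p$ parts from each $p\in s$, every term becomes a tensor product of PSD matrices, so every relevant submatrix of $B^s$ sits inside a PSD matrix. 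The remaining observations---that the pairing identity transfers to $\phi^s_\kappa$ by the same multiplicative computation, that there is a nondegeneracy caveat near $|p|-\kappa_p=2$, and that only the ``if'' direction of the theorem is needed so surjectivity of $Z\to\prod_p p$ is immaterial---are all sound.
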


Examining the matrix
\[
\bigotimes_{p \in k\cap s} \Big((|p|-\kappa_p)I_p - J_p\Big)\bigotimes_{q \in k\setminus s} \Big(|q|I_q - J_q\Big)\bigotimes_{r\not\in k}J_r,
\]
we see that there is little point in introducing a positive weight $w^s_k$ for it if $k\setminus s$ is nonempty, since $|q|I_q - J_q$ is a positive combination of the matrices $(|q|-\kappa_q)I_q - J_q$ and $J_q$. This justifies considering the following simpler semidefinite relaxation.

\begin{defn} If $Z$ is a set, $\cP$ is a collection of partitions of $Z$, $\kappa_p$ are integers attached to the partitions $p \in \cP$, and $D_s, D_f$ are collections of subsets of $\cP$, then we define the $D_s,D_f$-\emph{relaxation} of the sifting problem to be the problem of finding the following minimum $\lambda$ such that there exist $B^d \in \cB_{d,\kappa}$ for $d \in D_s$ and $w_d \ge 0$ for $d \in D_f$ with
\[
\sum_{d \in D_s} B^d + \sum_{d \in D_f} w_d \bigotimes_{p \in d} \Big((|p|-\kappa_p)I_p - J_p\Big)\bigotimes_{q\not\in d}J_q + J = \lambda I.
\]
If $0 \in D_s$, then we also allow a matrix $B^0$ to be included, where the only constraint on $B^0$ is that its entries are nonnegative.
\end{defn}

The Large Sieve and the Larger Sieve can both be thought of as living in this framework, with $D_s = \{1\}$ for the Large Sieve and $D_s = \{0\}$ for the Larger Sieve. In the case of the Larger Sieve, $D_f$ contains only primes, and the correspoding weights $w_p$ are proportional to $\log(p)$. One weakness of taking $D_s = \{0\}$ as in the Larger Sieve is that if the set $Z$ contains any pair of elements $i \ne j$ such that $(i-j,\cP) = \cP$, then it will be impossible to find a system of weights $w_d \ge 0$ such that
\[
\sum_{d \in D_f} w_d \bigotimes_{p \in d} \Big((|p|-\kappa_p)I_p - J_p\Big)\bigotimes_{q\not\in d}J_q + J
\]
has nonpositive off-diagonal entries, since the $i,j$ entry will automatically be at least $1$.

In the case of the Large Sieve, if only prime $d$s are used in $D_f$ (as is common in many applications), then the weights $w_d$ are proportional to $\frac{p-\kappa_p}{p}$.

The special case $D_s = \{1\}$ is particularly interesting in the case $Z$ is an interval: if we have
\[
B^1 + \sum_{d \in D_f} w_d \bigotimes_{p \in d} \Big((|p|-\kappa_p)I_p - J_p\Big)\bigotimes_{q\not\in d}J_q + J = \lambda I,
\]
then $B^1$ will have constant diagonal entries, so it will be a symmetric Toeplitz matrix, and can potentially be checked for positive semidefiniteness with a specialized algorithm. If $\lambda$ is taken to be minimal, then $B^1$ will also be singular. In this case, we can apply the Pisarenko harmonic decomposition (see \cite{pisarenko-harmonic}, which uses Chapter 4 of \cite{grenander-toeplitz}) to write
\[
B^1 = V^HDV,
\]
where $D$ is a diagonal matrix with positive entries and dimension equal to the rank of $B^1$, and each row of $V$ has the form $(1, e^{2\pi i f}, ..., e^{2\pi i (N-1)f})$ for some frequency $f$. The set of frequencies is determined by $B^1$ via the fact that $B^1$ and $V$ must have the same nullspace. This decomposition of $B^1$, together with the weights $w_d$, can be used to give very short proofs of upper bounds on the sizes of admissible tuples contained in short intervals.

\section{The first level of the semidefinite hierarchy}\label{s-first-level}

The Large Sieve is often stated in a weaker form than the one given here, where we restrict the set of $d$s considered to the primes, and this weaker form gives comparable results. Additionally, the Larger Sieve makes no use of non-prime $d$ whatsoever, and in fact it is hard to modify it to take them into account in any useful way. Do we really need to use the matrices $B^d$ with $d$ non-prime to get good results? What happens if we restrict ourselves to just using the primes?

A preliminary result is that if $p$ corresponds to a partition into just two parts, then there is never any reason to consider $d = pk$ with $k$ nontrivial. As a consequence, if all partitions have two parts, then the semidefinite framework considered here gets the exact answer at just the first level of the semidefinite hierarchy (i.e., using only $B^p$s with $p$ corresponding to a single partition).

\begin{thm} Suppose that $p$ is a partition into two parts $c_0,c_1$, and that $D$ is a collection of subsets of $\cP\setminus \{p\}$ such that the expression \eqref{target}, when restricted to $c_i$, proves the bound $|X\cap c_i| \le \lambda_i$. Then using the collection of subsets $D \cup \{p\}$, the expression \eqref{target} proves the bound $|X| \le \max(\lambda_0,\lambda_1)$.
\end{thm}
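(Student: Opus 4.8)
The plan is to combine the witnessing matrices for the two restricted problems (on $c_0$ and on $c_1$) with a carefully chosen matrix $B^p \in \cB_p$ that "glues" them together. Concretely, suppose the restricted expression \eqref{target} on $c_i$ is witnessed by matrices $B^{d}_i \in \cB_d$ (for $d \in D$) with $\sum_{d \in D} B^d_i + J_{c_i} = \lambda_i I_{c_i}$, where all matrices live in $\RR^{c_i \times c_i}$. Set $\lambda = \max(\lambda_0,\lambda_1)$. The goal is to produce matrices $\tilde B^d \in \cB_d$ for $d \in D \cup \{p\}$, living in $\RR^{Z \times Z}$, with $\sum_{d \in D \cup \{p\}} \tilde B^d + J = \lambda I$; by Theorem \ref{duality} this certifies $|X| \le \lambda$. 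For $d \in D$, take $\tilde B^d$ to be the block-diagonal matrix with blocks $B^d_0$ on $c_0$ and $B^d_1$ on $c_1$ (padded with zeros on the cross-block entries); since deleting a part of any $q \in d$ leaves both blocks intact and p.s.d., and a block-diagonal matrix is p.s.d. iff its blocks are, we indeed have $\tilde B^d \in \cB_d$ — this uses that $d \subseteq \cP \setminus \{p\}$, so the parts of $p$ are not among the parts being deleted for membership in $\cB_d$.

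With these choices, $\sum_{d \in D} \tilde B^d + J$ has the block structure $\begin{bmatrix} \lambda_0 I - J_{c_0} + J_{c_0} & J \\ J & \lambda_1 I - J_{c_1} + J_{c_1} \end{bmatrix}$ restricted appropriately — more precisely, the diagonal blocks are $\lambda_0 I_{c_0}$ and $\lambda_1 I_{c_1}$ (the $+J$ on each block cancels the $-J$ hidden in $\sum_d B^d_i = \lambda_i I - J_{c_i}$), and the off-diagonal (cross-part) block is the all-ones matrix $J$ coming from the global $J$. So I need to choose $\tilde B^p \in \cB_p$ so that $\sum_{d \in D} \tilde B^d + \tilde B^p + J = \lambda I$, i.e. $\tilde B^p = \lambda I - (\sum_{d \in D} \tilde B^d + J)$, which in block form is $\begin{bmatrix} (\lambda - \lambda_0) I_{c_0} & -J \\ -J & (\lambda - \lambda_1) I_{c_1} \end{bmatrix}$. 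Membership in $\cB_p$ requires: deleting the rows/columns of $c_0$ leaves $(\lambda - \lambda_1) I_{c_1} \succeq 0$, which holds since $\lambda \ge \lambda_1$; and symmetrically deleting $c_1$ leaves $(\lambda - \lambda_0)I_{c_0} \succeq 0$. That is exactly the content of $\tilde B^p \in \cB_p$ for a two-part partition, so we are done.

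The main thing to check carefully — and what I expect to be the one genuinely delicate point — is the bookkeeping around the stray $J$ matrices: one must confirm that the global all-ones matrix $J$ on $Z$, when split into blocks, contributes exactly $J_{c_0}$, $J_{c_1}$ on the diagonal (matching what the restricted certificates already account for) and $J$ on the off-diagonal (which is what $\tilde B^p$ must absorb), and that the restricted identity $\sum_{d\in D} B^d_i + J_{c_i} = \lambda_i I_{c_i}$ is being applied with the correct normalization. A secondary point is to make sure the degenerate cases ($c_0$ or $c_1$ empty, or $\lambda_i$ attained trivially) don't break the argument, but these are immediate. Once the block decomposition is written out, every inequality needed reduces to $\lambda \ge \lambda_i$ and to the already-established $\tilde B^d \in \cB_d$, with no further computation.
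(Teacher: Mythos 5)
Your approach is essentially the paper's: build block-diagonal matrices $\tilde B^d$ from the restricted certificates, and glue them with a $\tilde B^p$ whose off-diagonal blocks are $-J$. The only real difference is the choice of diagonal blocks for $\tilde B^p$. The paper takes $B^p = \begin{bmatrix} 0 & -J\\ -J & 0\end{bmatrix}$ (zero diagonal), so the total sum $\sum_{d\in D} B^d + B^p + J$ is block diagonal with blocks $\sum_d B^d_i + J_{c_i}$, and the operator norm of a block-diagonal matrix is the max of the block norms, which is immediately $\le \max(\lambda_0,\lambda_1)$. You instead pad $\tilde B^p$'s diagonal blocks so that the total sum is exactly $\lambda I$, and verify $\tilde B^p \in \cB_p$ by checking each diagonal block is p.s.d. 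Both work; the paper's version is slightly cleaner because it doesn't need to assume anything about the form of the restricted certificates.

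Two small points to tighten. First, you assume $\sum_{d\in D} B^d_i + J_{c_i} = \lambda_i I_{c_i}$, but the hypothesis only guarantees that $\|\sum_d B^d_i + J_{c_i}\|_{op} \le \lambda_i$; the exact identity form holds only at the optimum by the last sentence of Theorem~\ref{duality}. This is harmless (replace $\lambda_i$ by the actual minimum, or just set $\tilde B^p = \lambda I - \sum_d \tilde B^d - J$ and note the diagonal blocks $\lambda I_{c_i} - (\sum_d B^d_i + J_{c_i})$ are p.s.d. from the norm bound), but it should be said. Second, your remark that deleting a part of $q \in d$ ``leaves both blocks intact'' is not quite right: a part of $q$ generally meets both $c_0$ and $c_1$, so rows and columns are deleted from \emph{both} blocks. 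The conclusion is still correct — the surviving matrix is block diagonal with blocks that are exactly the submatrices of $B^d_i$ whose positivity is guaranteed by $B^d_i \in \cB_d$ restricted to $c_i$ — but the justification needs that extra half-sentence.
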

\begin{proof} Suppose that the bound on $|X \cap c_i|$ is proved via a system of matrices $B^d_i$. Then we can use the following matrices to prove the desired bound on $|X|$:
\[
B^p = \begin{bmatrix} 0 & -J\\ -J & 0\end{bmatrix}, \;\;\; B^d = \begin{bmatrix} B^d_0 & 0\\ 0 & B^d_1\end{bmatrix},
\]
where the indices are assumed to be arranged so that $c_0$ comes before $c_1$, and the matrices $-J$ in $B^p$ are rectangular rather than square. Then we have
\[
B^p + \sum_{d \in D} B^d + J = \begin{bmatrix} \sum B^d_0 + J & 0\\ 0 & \sum B^d_1 + J\end{bmatrix},
\]
and the operator norm of this matrix is clearly equal to $\max(\lambda_0,\lambda_1)$.
\end{proof}

Now we consider a toy scenario where the partitions are ``orthogonal'', so $Z = \prod_{p \in \cP} p$. In this case it is easy to compute the true upper bound on $|X|$ (i.e., $|X| \le \phi(\cP)$) using a symmetry argument, but the linear sieve framework is unable to prove this bound without using sieve weights $\lambda_d$ supported on all subsets $d \subseteq \cP$. In the semidefinite framework, the situation is nicer: we only need to use matrices $B^p$ for $p$ corresponding to partitions, and in fact the space of systems of matrices $B^p$ that prove the true upper bound is enormous.

\begin{thm} If $Z = \prod_{p\in \cP} p$, then there is a high-dimensional set of systems $B^p \in \cB_p$ such that
\[
\sum_{p \in \cP} B^p + J = \phi(\cP) I.
\]
\end{thm}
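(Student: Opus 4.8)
The plan is to reduce the claim to an explicit construction plus a dimension count. The set of valid systems $\{(B^p)_{p\in\cP}:B^p\in\cB_p,\ \sum_p B^p+J=\phi(\cP)I\}$ is the intersection of the affine space $\{\sum_p B^p=\phi(\cP)I-J\}$ with the convex cone $\prod_p\cB_p$, so it is convex; hence it suffices to exhibit an explicit affine family sitting inside it and to lower-bound the dimension of that family.

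Write $N_p=(|p|-1)I_p-J_p$, so that $N_p+J_p=(|p|-1)I_p$ and, by Cauchy--Schwarz, deleting any part $c$ of $p$ leaves $N_p|_{Z\setminus c}=(|p|-1)I-J\succeq 0$. Expanding $\bigotimes_{p\in\cP}(N_p+J_p)=\bigotimes_p(|p|-1)I_p=\phi(\cP)I$ and pulling off the $S=\emptyset$ term $\bigotimes_q J_q=J$ gives the decomposition
\[
\phi(\cP)I-J=\sum_{\emptyset\ne S\subseteq\cP}\ \bigotimes_{p\in S}N_p\bigotimes_{q\notin S}J_q .
\]
For nonempty $S$ and any $p\in S$ the summand $T_S:=\bigotimes_{p\in S}N_p\bigotimes_{q\notin S}J_q$ regroups as $N_p\otimes\bigl(\bigotimes_{p'\in S\setminus p}N_{p'}\bigotimes_{q\notin S}J_q\bigr)$. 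So for any weights $\gamma^p_T\ge 0$ (indexed by $p\in\cP$ and $T\subseteq\cP\setminus\{p\}$) satisfying $\sum_{p\in S}\gamma^p_{S\setminus\{p\}}=1$ for every nonempty $S$, I set
\[
B^p=N_p\otimes G^p,\qquad G^p:=\sum_{T\subseteq\cP\setminus\{p\}}\gamma^p_T\ \bigotimes_{p'\in T}N_{p'}\bigotimes_{q\notin T\cup\{p\}}J_q ,
\]
and then $\sum_p B^p=\sum_{\emptyset\ne S}T_S=\phi(\cP)I-J$ automatically. Deleting a part $c$ of $p$ acts on the first tensor factor only, so $B^p|_{Z\setminus c}=(N_p|_{Z\setminus c})\otimes G^p\succeq 0$ whenever $G^p\succeq 0$; thus every such $(\gamma^p_T)$ with all $G^p\succeq 0$ is a valid system (the $0/1$ choices $\gamma^p_T=1$ iff $T$ lies among the predecessors of $p$ in a fixed ordering recover the obvious telescoping solutions).

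The crux is to check that the symmetric choice $\gamma^p_T=\tfrac1{|T|+1}$ (which indeed gives $\sum_{p\in S}\tfrac1{|S|}=1$) makes every $G^p$ \emph{positive definite}. Here I would diagonalize $G^p$ in the character basis of $\cP\setminus\{p\}$ as in Theorem \ref{sym-semidefinite}: substituting $N_{p'}=\tfrac{|p'|-1}{|p'|}(|p'|I_{p'}-J_{p'})-\tfrac1{|p'|}J_{p'}$ and expanding, the coefficient of $\bigotimes_{p'\in U}(|p'|I_{p'}-J_{p'})\bigotimes_{q\notin U\cup\{p\}}J_q$ in $G^p$ comes out to $\prod_{p'\in U}\tfrac{|p'|-1}{|p'|}$ times
\[
\sum_{W\subseteq\cP\setminus(U\cup\{p\})}\frac{(-1)^{|W|}}{|U|+|W|+1}\prod_{p'\in W}\frac1{|p'|}=\int_0^1 t^{|U|}\prod_{p'\in\cP\setminus(U\cup\{p\})}\Bigl(1-\frac{t}{|p'|}\Bigr)\,dt ,
\]
using $\tfrac1{m+1}=\int_0^1 t^m\,dt$; since each factor $1-t/|p'|\ge\tfrac12$ on $[0,1]$, this integral is strictly positive. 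Because the matrices $\bigotimes_{p'\in U}(|p'|I_{p'}-J_{p'})\bigotimes_q J_q$ are positive multiples of projections onto pairwise orthogonal subspaces that together span $\bigotimes_{q\ne p}\RR^{\ZZ/|q|}$, a strictly positive combination of them is positive definite, so $G^p\succ 0$ at this point.

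Finally I would conclude by a dimension count: positive definiteness is an open condition and the constraints $\sum_{p\in S}\gamma^p_{S\setminus\{p\}}=1$ are affine, so a whole neighborhood of $(\tfrac1{|T|+1})$ inside that affine space still gives valid systems, and distinct $\gamma$'s give distinct systems because the tensors $\bigotimes_{p'\in T}N_{p'}\bigotimes_q J_q$ are linearly independent. The constraint system has $|\cP|\,2^{|\cP|-1}$ unknowns, $2^{|\cP|}-1$ equations, and each unknown $\gamma^p_T$ appears in exactly one equation (for $S=T\cup\{p\}$), so the equations are independent; hence the family of valid systems has dimension $|\cP|\,2^{|\cP|-1}-2^{|\cP|}+1=2^{|\cP|-1}(|\cP|-2)+1$, growing exponentially in $|\cP|$. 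The one step requiring genuine care — the main obstacle — is the positivity of those character-basis coefficients at the symmetric point: the integral identity settles it cleanly, but one must be careful about which $J_q$ factors merge when expanding the $N_{p'}$, and must invoke Theorem \ref{sym-semidefinite} in the direction giving strict (not merely weak) definiteness, which is legitimate since $Z=\prod_p p$.
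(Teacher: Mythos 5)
Your proof is correct, and while it shares the paper's overall reduction (symmetric, translation-invariant matrices of the form $B^p=N_p\otimes G^p$ with $G^p$ a nonnegative combination of tensor products of $I$'s and $J$'s), the way you establish high-dimensionality is genuinely different. The paper derives the full linear system of equalities and inequalities for the weights $w^p_k$, proves $w^p_k=0$ is forced for $p\not\in k$, and then argues by a downward induction on $j$ that one can keep choosing positive $w^p_j$'s, picking up a high-dimensional set of choices at each stage. You instead observe the clean decomposition $\phi(\cP)I-J=\sum_{\emptyset\ne S}T_S$ coming from expanding $\bigotimes_p(N_p+J_p)$, split each $T_S$ among the $p\in S$ by a partition of unity $\gamma$, and exhibit an explicit \emph{interior} point $\gamma^p_T=\tfrac1{|T|+1}$; the integral identity $\sum_{W}\frac{(-1)^{|W|}}{|U|+|W|+1}\prod_{p'\in W}\frac1{|p'|}=\int_0^1 t^{|U|}\prod_{p'}(1-t/|p'|)\,dt>0$ verifying $G^p\succ 0$ there is the key computation, and the conclusion follows from openness of positive definiteness together with a clean count showing the constraint equations $\sum_{p\in S}\gamma^p_{S\setminus\{p\}}=1$ are independent. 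What your route buys is an explicit center and an explicit dimension $2^{|\cP|-1}(|\cP|-2)+1$; what the paper's inductive approach buys is a description of the \emph{entire} family of symmetric tensor-product solutions rather than a neighborhood of one point. One small point worth flagging: the strict-definiteness direction you extract from Theorem \ref{sym-semidefinite} is not literally stated there (only the $\succeq 0$ equivalence is), but it is immediate from the same character-basis diagonalization when $Z\to\prod_p p$ is a bijection, which is your setting; you could also bypass the theorem entirely and just note, as you do, that the tensor-product matrices indexed by $U$ are positive multiples of mutually orthogonal projections summing to the identity on $\bigotimes_{q\ne p}\RR^{\ZZ/|q|}$.
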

\begin{proof} Due to the symmetry of the problem, we only need to consider matrices $B^p$ which have the property that $B^p_{ij}$ only depends on $(i-j,\cP)$. So suppose that each $B^p$ has the form
\[
B^p_{ij} = b^p_{(i-j,\cP)}.
\]


By Theorem \ref{sym-semidefinite}, if we define $\phi^p(d)$ by
\[
\phi^p(d) = \begin{cases} \phi(d) & p \not\in d,\\ \frac{|p|-2}{|p|-1}\phi(d) & p \in d,\end{cases}
\]
then in order for the system of matrices $B^p$ to prove the bound $|X| \le \phi(\cP)$, we need the system of numbers $b^p_d$ to satisfy the following system of equations and inequalities:
\begin{align}
\sum_{d} \phi^p(\cP\!\setminus\! (d\cup k))\mu(k\!\setminus\! d)\ b^p_d &\ge 0\text{ for all }p, k,\label{ell}\\
\sum_p b^p_d + 1 &= 0\text{ for all }d \ne \cP,\label{sums-2}\\
\sum_p b^p_{\cP} + 1 &= \phi(\cP).\label{sums-3}
\end{align}
Summing the first inequality above over $k' \subseteq k$ with weight $\phi^p(k')$ for some fixed $k$, we get
\[
\sum_{d \supseteq k} \phi^p(\cP\!\setminus\! d)\ b^p_d \ge 0\text{ for all }p, k.
\]

Summing the above inequality over $p \not\in k$ with weights $\phi^\cP(\cP\setminus pk)\phi(\cP\setminus p)^{-1}$ where $\phi^\cP(k) = \prod_{q \in k} (|q|-2)$, we get
\begin{align*}
0 \le &\sum_{p \not\in k} \phi^\cP(\cP\!\setminus\!pk)\phi(\cP\!\setminus\!p)^{-1} \sum_{d \supseteq k} \phi^p(\cP\!\setminus\! d)\ b^p_d\\
&= \sum_{p \not\in d} (b^p_{pd} + (|p|-2)b^p_{d})\phi(d)^{-1} \sum_{k \subseteq d} \phi^\cP(\cP\!\setminus\!pk)\\
&= \sum_{d} \phi^\cP(\cP\!\setminus\!d) \sum_p b^p_d\\
&= \Big(\sum_p b^p_\cP + 1\Big) - \sum_d \phi^\cP(\cP\!\setminus\!d)\\
&= \Big(\sum_p b^p_\cP + 1\Big) - \phi(\cP) = 0,
\end{align*}
so all the inequalities with $p \not\in k$ must be satisfied with equality. This implies the system of equations
\[
b^p_{pd} + (|p|-2)b^p_d = 0\text{ for all }p \not\in d.
\]
If we impose these extra equations, then the inequalities $\sum_{d \supseteq k} \phi^p(\cP\!\setminus\! d)\ b^p_d \ge 0$ with $p \not\in k$ will be satisfied with equality, and the equations $\sum_p b^p_d + 1 = 0$ for $d \ne \cP$, $\sum_p b^p_{\cP} + 1 = \phi(\cP)$ will become linearly dependent modulo these extra equations.

For the remaining inequalities, we introduce variables $w^p_k$ such that
\[
b^p_d = \sum_k \phi^p(d\cap k)\mu(k\setminus d) w^p_k,
\]
and the remaining inequalities \eqref{ell} will be satisfied iff we have $w^p_k \ge 0$ for all $p,k$. The system of identities $b^p_{pd} + (|p|-2)b^p_d = 0$ for all $p \not\in d$ is equivalent to the system of identities $w^p_k = 0$ for all $p \not\in k$, since both are equivalent to \eqref{ell} being satisfied with equality for all $p \not\in k$.

Summing \eqref{sums-2} and \eqref{sums-3} with weights $\phi(\cP\!\setminus\! (d\cup j))\mu(j\!\setminus\! d)$ and replacing $b^p_d$s by their definitions in terms of the $w^p_k$s, we get
\begin{align*}
\phi(\cP) - \sum_d \phi(\cP\!\setminus\! (d\cup j))\mu(j\!\setminus\! d) &= \sum_{d,p} \phi(\cP\!\setminus\! (d\cup j))\mu(j\!\setminus\! d) b^p_d\\
&= \sum_{k,d,p} \phi(\cP\!\setminus\! (d\cup j))\mu(j\!\setminus\! d) \phi^p(d\cap k)\mu(k\setminus d) w^p_k\\
&= \sum_{p, k} w^p_k \sum_d \phi(\cP\!\setminus\! (d\cup j))\mu(j\!\setminus\! d) \phi^p(d\cap k)\mu(k\setminus d)\\
&= \sum_{p \in j} w^p_j \frac{|p|-1}{|p|} |\cP| - \sum_{p \not\in j} w^p_{pj} \frac{1}{|p|} |\cP|.
\end{align*}
Thus, for $j \ne \emptyset$ we have
\[
\sum_{p \in j} w^p_j \frac{|p|-1}{|p|} = \prod_p \frac{|p|-1}{|p|} + \sum_{p \not\in j} w^p_{pj} \frac{1}{|p|},
\]
and for $j = \emptyset$ we have
\[
1 = \prod_p \frac{|p|-1}{|p|} + \sum_{p} w^p_{p} \frac{1}{|p|},
\]
and this last equation follows from the equations for $j \ne \emptyset$ due to the linear dependence found earlier. We can now pick positive values for the $w^p_j$s by a downward induction on $j$, starting with $j = \cP$, and at each step of the inductive construction we find that we have a high dimensional set of choices.
\end{proof}

\bibliographystyle{plain}
\bibliography{sieve}

\end{document}